\documentclass[a4paper,11pt]{amsart}
\usepackage{amssymb}
\usepackage{ifthen}
\usepackage{cite}
 \usepackage[dvips]{graphicx}

\nonstopmode \numberwithin{equation}{section}
\usepackage{amssymb}
\usepackage{ifthen}
\usepackage{graphicx}
\usepackage{amsmath}
\usepackage[T1]{fontenc} 
\usepackage[utf8]{inputenc}
\usepackage[usenames,dvipsnames]{color}
\usepackage{color}
\usepackage[english]{babel}
\usepackage{fancyhdr}
\usepackage{fancybox}
\usepackage{tikz}

\nonstopmode \numberwithin{equation}{section}
\theoremstyle{plain}

\newtheorem{conj}{Conjecture}

\theoremstyle{definition}
\newtheorem{defn}{Definition}[section]

\newtheorem{thm}{Theorem}[section]
\newtheorem{prob}{Problem}[section]
\newtheorem{cor}{Corollary}[section]

\newtheorem{prop}{Proposition}[section]
\newtheorem{rem}{Remark}[section]
\newtheorem{lem}{Lemma}[section]
\newtheorem{defi}{Definition}[section]

\newtheorem*{thmA}{Theorem A}
\newtheorem*{thmB}{Theorem B}

\newtheorem*{lemA}{Lemma A}
\newtheorem*{lemB}{Lemma B}

\newcounter{minutes}
\divide\time by 60
\newcounter{hours}
\multiply\time by 60
\addtocounter{minutes}{-\time}

\newcounter {own}
\def\theown {\thesection       .\arabic{own}}

\newenvironment{pf}[1][]{%
 \vskip 3mm
 \noindent
 \ifthenelse{\equal{#1}{}}%
  {{\slshape Proof. }}%
  {{\slshape #1.} }%
 }%
{\qed\bigskip}

\newcounter{alphabet}

\def\be{\begin{equation}}
\def\ee{\end{equation}}

\newcommand{\bee}{\begin{enumerate}}
\newcommand{\eee}{\end{enumerate}}

\newcommand{\blem}{\begin{lem}}
\newcommand{\elem}{\end{lem}}
\newcommand{\bthm}{\begin{thm}}
\newcommand{\ethm}{\end{thm}}
\newcommand{\bcor}{\begin{cor}}
\newcommand{\ecor}{\end{cor}}
\newcommand{\beg}{\begin{examp}}
\newcommand{\eeg}{\end{examp}}
\newcommand{\begs}{\begin{examples}}
\newcommand{\eegs}{\end{examples}}

\newcommand{\bdefn}{\begin{defn}}
\newcommand{\edefn}{\end{defn}}

\newcommand{\bprob}{\begin{prob}}
\newcommand{\eprob}{\end{prob}}
\newcommand{\bei}{\begin{itemize}}
\newcommand{\eei}{\end{itemize}}

\newcommand{\bcon}{\begin{conj}}
\newcommand{\econ}{\end{conj}}
\newcommand{\bcons}{\begin{conjs}}
\newcommand{\econs}{\end{conjs}}
\newcommand{\bprop}{\begin{prop}}
\newcommand{\eprop}{\end{prop}}
\newcommand{\br}{\begin{rem}}
\newcommand{\er}{\end{rem}}
\newcommand{\brs}{\begin{rems}}
\newcommand{\ers}{\end{rems}}
\newcommand{\bo}{\begin{obser}}
\newcommand{\eo}{\end{obser}}
\newcommand{\bos}{\begin{obsers}}
\newcommand{\eos}{\end{obsers}}
\newcommand{\bpf}{\begin{pf}}
\newcommand{\epf}{\end{pf}}
\newcommand{\ba}{\begin{array}}
\newcommand{\ea}{\end{array}}
\newcommand{\beq}{\begin{eqnarray}}
\newcommand{\beqq}{\begin{eqnarray*}}
\newcommand{\eeq}{\end{eqnarray}}
\newcommand{\eeqq}{\end{eqnarray*}}

\begin{document}

\title{Coefficient estimates and Bohr phenomenon for analytic functions involving semigroup generator}

\author{Molla Basir Ahamed}
\address{Molla Basir Ahamed, Department of Mathematics, Jadavpur University, Kolkata-700032, West Bengal, India.}
\email{mbahamed.math@jadavpuruniversity.in}

\author{Sanju Mandal}
\address{Sanju Mandal, Department of Mathematics, Jadavpur University, Kolkata-700032, West Bengal, India.}
\email{sanju.math.rs@gmail.com, sanjum.math.rs@jadavpuruniversity.in}

\subjclass[{AMS} Subject Classification:]{30A10, 30H05, 30C35, 30C45, 30C50}
\keywords{Bohr inequality, Logarithmic coefficients, Fekete–Szeg\"o inequality, Semigroup generators, Multiple Schwarz functions, Normalized area functional}

\def\thefootnote{}
\footnotetext{ {\tiny File:~\jobname.tex,
printed: \number\year-\number\month-\number\day,
          \thehours.\ifnum\theminutes<10{0}\fi\theminutes }
} \makeatletter\def\thefootnote{\@arabic\c@footnote}\makeatother

\begin{abstract} 
This article investigates the Bohr phenomenon and sharp coefficient problems for the class $\mathcal{A}_{\beta}$, a subclass of analytic self-maps of the unit disk with the holomorphic generators of one-parameter continuous semigroups. By integrating concepts from complex dynamics and geometric function theory, we derive sharp improvements to the classical Bohr radius by incorporating multiple Schwarz functions and certain functional expressions. We establish generalized versions of the Bohr and Bohr-Rogosinski inequalities and determine the best possible radii for these refinements. Furthermore, we provide a sharp solution to the classical Fekete-Szeg\"o problem for the class $\mathcal{A}_{\beta}$ by obtaining sharp bounds for the functional $|a_3 - \mu a_2^2|$ for all real values of $\mu$. Additionally, we derive sharp inequalities for the moduli of differences of logarithmic coefficients for both the functions and their inverses in this class. 
\end{abstract}

\maketitle
\pagestyle{myheadings}
\markboth{M. B. Ahamed and S. Mandal}{Coefficient problems and Bohr phenomenon}

\section{\bf Introduction}
Let $\mathcal{H}$ be the class of holomorphic functions in the unit disk $\mathbb{D}:=\{z\in\mathbb{C} : |z|<1\}$ and $\mathcal{A}\subset \mathcal{H}$ containing functions having properties $f(0)=0$ and $f'(0)=1$ \textit{i.e.}, of the form
\begin{align}\label{Eq-2.1}
	f(z)=z+\sum_{n=2}^{\infty}a_nz^n.
\end{align}
Let $\mathcal{B}$ be the class of holomorphic self mappings from $\mathbb{D}$ to $\mathbb{D}$. A family $\{u_t(z)\}_{t\geq 0}\subset\mathcal{B}$ is called a one-parameter continuous semigroup if 
\begin{enumerate}
	\item[(i)] $\lim\limits_{t\to 0} u_t(z)=z$.
	\item[(ii)] $u_{t+s}(z)=u_t(z)u_s(z)$,
	\item[(iii)]  $\lim\limits_{t\to s}u_t(z)=u_s(z)$
\end{enumerate}
for each $z\in\mathbb{D}$ hold.\vspace{1.2mm}

In $1978$, Berkson and Potra \cite{Berkson-Porta-MMJ-1978} shows that each one-parameter semigroup is locally differentiable in parameter $t\geq 0$ and moreover, if 
\begin{align*}
	\lim\limits_{t\to 0}\frac{z-u_t(z)}{t}=f(z),
\end{align*}
which is a holomorphic function, then $u_t(z)$ is the solution of the Cauchy problem 
\begin{align*}
	\frac{\partial u_t(z)}{\partial t}+f(u_t(z))=0,\; u_0(z)=z.
\end{align*}
The function $f$ is called the holomorphic generator of semigroup $\{u_t(z)\}\subset\mathcal{B}$. The class of all holomorphic generators is denoted by $\mathcal{G}$. Also, note that each element of $\{u_t(z)\}$ generated by $f\in\mathcal{G}$ is univalent function while $f$ is not necessarily univalent (see \cite{Elin-Shoiket-2010}). Various properties of generators and semigroup generated by them are established in \cite{Berkson-Porta-MMJ-1978,Bracci-Contreras-Diaz-2020,Elin-Shoiket-2010,Elin-Shoikhet-Sugawa-2018,Elin-Reich-Shoikhet-2019,Shoikhet-2001} and references therein. Berkson and Porta \cite{Berkson-Porta-MMJ-1978} proved the following theorem.
\begin{thmA}\cite{Berkson-Porta-MMJ-1978}
The following assertions are equivalent:
\begin{enumerate}
	\item[(a)] $f\in\mathcal{G}$,
	\item[(b)] $f(z)=(z-\sigma)(1-z\bar{\sigma})p(z)$ with some $\sigma\in\overline{\mathbb{D}}$ and $p\in\mathcal{H}$,\; ${\rm Re}\; p(z)\geq 0$.  
\end{enumerate}
\end{thmA}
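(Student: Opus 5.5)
To prove Theorem A, I will establish the two implications separately, using the flow-invariance characterization of semigroups together with the Julia--Wolff--Carath\'eodory theory for the point $\sigma$. Here I interpret condition (ii) as composition, $u_{t+s}=u_t\circ u_s$, which is what the Cauchy problem requires.

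For (b)$\Rightarrow$(a), let $f(z)=(z-\sigma)(1-z\bar\sigma)p(z)$ with ${\rm Re}\,p\ge 0$. The plan is to show that the solution $u_t(z)$ of $\partial_t u_t+f(u_t)=0$, $u_0=z$, exists for all $t\ge 0$ and stays in $\mathbb{D}$. Local existence, holomorphy in $z$ and the semigroup law follow from standard ODE theory, since $f$ is holomorphic and autonomous. The key step is a boundary (Lyapunov) estimate. Take first $\sigma\in\mathbb{D}$, and set $\varphi_\sigma(z)=(z-\sigma)/(1-\bar\sigma z)$ and $w=\varphi_\sigma(u_t)$. A direct computation gives
\[
\frac{d}{dt}|w|^2=-2\,{\rm Re}\bigl(\bar w\,\varphi_\sigma'(u)f(u)\bigr)=-2\,|w|^2\,\frac{(1-|\sigma|^2)\,|1-\bar\sigma u|^2}{|1-\bar\sigma u|^2}\,{\rm Re}\,p(u)\le 0 ,
\]
so $|w|$ is nonincreasing and $u_t$ remains in a compact sub-disk. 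Hence the solution is global and defines a continuous semigroup in $\mathcal{B}$. For $\sigma\in\partial\mathbb{D}$, I will run the same argument with horocycles in place of discs, using the function $|1-\bar\sigma u|^2/(1-|u|^2)$. Alternatively, one can approximate $\sigma$ by interior points $\sigma_n\to\sigma$ and pass to the limit, using that locally uniform limits of generators are generators.

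For (a)$\Rightarrow$(b), I start from the fact that $u_t(z)\in\mathbb{D}$. Differentiating at $t=0$ gives the one-sided condition ${\rm Re}\bigl(f(z)\bar z\bigr)\ge -\tfrac12(1-|z|^2)\,c$, or more usefully the Schwarz--Pick type inequality
\[
{\rm Re}\bigl(2f(z)\bar z\bigr)\ge (1-|z|^2)\,{\rm Re}\bigl(f(0)\bar z\bigr)\cdots ,
\]
which is obtained by comparing $u_t$ with the Schwarz--Pick bound for self-maps of $\mathbb{D}$. From this I would aim to show that $q(z)=f(z)/\bigl(z\text{-type factor}\bigr)$ has nonnegative real part. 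The natural route is the Denjoy--Wolff theorem. If some $u_t$ is not an elliptic automorphism, the semigroup has a common Denjoy--Wolff point $\sigma\in\overline{\mathbb{D}}$.

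In the interior case, $\sigma$ is a common fixed point, so $f(\sigma)=0$. Conjugating by $\varphi_\sigma$ reduces to $\sigma=0$. Schwarz's lemma then says $|u_t(z)|\le |z|$, and differentiating at $t=0$ gives ${\rm Re}\bigl(\bar z f(z)\bigr)\ge 0$, that is, ${\rm Re}\bigl(f(z)/z\bigr)\ge 0$. Setting $p=f/z$ finishes this case, and conjugating back produces exactly the factor $(z-\sigma)(1-z\bar\sigma)$.

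In the boundary case, I will use Julia's lemma: every horodisc at $\sigma$ is invariant under each $u_t$. Differentiating the invariance of $|1-\bar\sigma u_t|^2/(1-|u_t|^2)$ at $t=0$ should give
\[
{\rm Re}\left(\frac{f(z)\,(1-\bar z\sigma)}{(z-\sigma)(1-\bar z\sigma)\cdot\overline{\ }\,}\right)\ge 0 ,
\]
which after simplification is ${\rm Re}\bigl(f(z)/((z-\sigma)(1-z\bar\sigma))\bigr)\ge 0$. The elliptic automorphism case, where the semigroup consists of rotations, gives $f(z)=i\theta z$, which is covered by taking $\sigma=0$ and $p\equiv i\theta$.

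The main obstacle is this boundary case. The delicate point is that the algebra of differentiating the horocycle functional must produce exactly the factor $(z-\sigma)(1-z\bar\sigma)$, and that Julia's inequality holds uniformly in $t$, so that the derivative at $t=0$ is legitimate. If that route is awkward, I would instead approximate $u_t$ by the maps $(1-\varepsilon)u_t$, which have interior fixed points, and pass to the limit in the interior-case representation.
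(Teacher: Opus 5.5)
The paper gives no proof of Theorem A (it is quoted from Berkson--Porta), so your plan has to stand on its own.

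\textbf{(a)$\Rightarrow$(b) is sound.} Suppose you have a common Denjoy--Wolff point. In the interior case, Schwarz's lemma after conjugation gives $g(t)\le g(0)$ for $g(t)=|u_t(z)|^2$. In the boundary case, Wolff's lemma gives the same for $g(t)=h(u_t(z))$ with $h(u)=|\sigma-u|^2/(1-|u|^2)$. No uniformity in $t$ is needed, because $g'(0^+)$ exists. In the boundary case it equals $-2\,{\rm Re}\bigl(f(z)\overline{(z-\sigma)(1-\bar\sigma z)}\bigr)/(1-|z|^2)^2$, which is exactly the condition you want. Your ``Schwarz--Pick type'' display is misstated: with the factor $2$ it fails for the generator $f(z)=a-\bar a z^2$. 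You do not need it in this direction.

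\textbf{The gap is (b)$\Rightarrow$(a) when $|\sigma|=1$.} In the interior case, what makes the maximal solution global is the \emph{compactness} of the set $\{|\varphi_\sigma(u)|\le|\varphi_\sigma(z)|\}$. The horocycle analogue is indeed monotone:
\[
\frac{d}{dt}h(u_t)=-2h(u_t)^2\,{\rm Re}\,p(u_t)\le 0 .
\]
But a closed horodisc is not a compact subset of $\mathbb{D}$. It touches $\partial\mathbb{D}$ at $\sigma$, where $p$ may blow up, and $h$ nonincreasing is fully compatible with $u_t\to\sigma$ in finite time. So ``the same argument'' proves nothing about existence for all $t\ge 0$.

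Your fallback does not rescue it. Closedness of generators under locally uniform limits is a nontrivial theorem, usually proved via the very estimate that is missing here. Moreover, the interior confinement sets $\{|\varphi_{\sigma_n}(u)|\le|\varphi_{\sigma_n}(z)|\}$ degenerate to the horodisc as $\sigma_n\to\sigma$. So you get no uniform compactness with which to pass to the limit.

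\textbf{A fix covering all $\sigma\in\overline{\mathbb{D}}$ at once.}
\begin{itemize}
\item For $|z|=1$ one has $1-\bar\sigma z=z\overline{(z-\sigma)}$, so $(z-\sigma)(1-\bar\sigma z)=z|z-\sigma|^2$.
\item For $0<r<1$ set $f_r(z)=(z-\sigma)(1-\bar\sigma z)p(rz)$ and $q_r(z)=\bigl(f_r(z)-f_r(0)+\overline{f_r(0)}z^2\bigr)/z$. On $|z|=1$ this gives ${\rm Re}\,q_r=|z-\sigma|^2\,{\rm Re}\,p(rz)\ge 0$. By the minimum principle, ${\rm Re}\,q_r\ge 0$ in $\mathbb{D}$.
\item One checks ${\rm Re}(\bar z f_r(z))=(1-|z|^2){\rm Re}(f_r(0)\bar z)+|z|^2{\rm Re}\,q_r(z)$, and $f_r(0)=f(0)$. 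Letting $r\to 1$ gives
\[
{\rm Re}(\bar z f(z))\ge(1-|z|^2){\rm Re}(f(0)\bar z)\ge-|f(0)|(1-|z|^2).
\]
\item Along a trajectory, $\frac{d}{dt}(1-|u_t|^2)=2{\rm Re}(\bar u_t f(u_t))\ge-2|f(0)|(1-|u_t|^2)$. Hence $1-|u_t(z)|^2\ge(1-|z|^2)e^{-2|f(0)|t}$, and the solution cannot reach $\partial\mathbb{D}$ in finite time.
\end{itemize}
This is the correct version of the inequality you half-wrote in the (a)$\Rightarrow$(b) paragraph; it belongs in this direction.
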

The point $\sigma\in\bar{\mathbb{D}}:=\{z\in\mathbb{C} : |z|\leq 1\}$ is called the Denjoy-Wolff point of the semigroup generated by $f$. By Denjoy-Wolff theorem (see \cite{Shoikhet-2001}) for continuous semigroups, if any element of the semigroup generated by $f$ is neither an eleliptic automorphism of $\mathbb{D}$ nor the identity map for at least one $t\in [0, \infty)$, then there is a unique point $\sigma\in\bar{\mathbb{D}}$ such that $\lim\limits_{t\to\infty}u_t(t, z)=\sigma$ uniformly, for each $z\in\mathbb{D}$.\vspace{2mm}

We denote the class of holomorphic generators with Denjoy-Wolff point $\sigma$ by $\mathcal{G}[\sigma]$. For $\sigma=0$, we obtain the subclass 
\begin{align*}
	\mathcal{G}[0]=\{f\in\mathcal{G} : f(z)=zp(z),\; {\rm Re}\;p(z)\geq 0\}.
\end{align*} 
Bracci \emph{et al.} \cite{Bracci et. al.-FACM-2018} considered the class $\mathcal{G}_0=\mathcal{G}[0]\cap \mathcal{A}$. In the study of nonautonomous problem such as Loewner theory, the class $\mathcal{G}_0$ plays a significant role (see \cite{Bracci-Contreras-Diaz-JRAM-2012, Duren-1983}). Various subclasses of $\mathcal{G}_0$ with parameter such that $\mathcal{R}$ is the smallest one were recently studied (also called filtration), where 
\begin{align*}
	\mathcal{R}=\{f\in\mathcal{A} : {\rm Re}\; f'(z)>0\}
\end{align*}
is the class of functions with bounded turning (see \cite{Bracci et. al.-FACM-2018,Elin-Shoikhet-Sugawa-2018,Shoikhet-MJM-2016}). In particular, for $\beta\in [0, 1]$, the class 
\begin{align}
	\mathcal{A}_{\beta}:=\left\{f\in\mathcal{A} : {\rm Re}\left(\beta\frac{f(z)}{z}+(1-\beta)f'(z)>0\right)\right\}
\end{align}
is a subclass of $\mathcal{G}_0$. In \cite{Bracci et. al.-FACM-2018}, it is proved that $\mathcal{A}_{\beta_1}\subsetneq \mathcal{A}_{\beta_2}\subsetneq \mathcal{G}_0$ for $0\leq\beta_1<\beta_2<1$ and whenever $f\in \mathcal{A}_{\beta}$,
\begin{align*}
	{\rm Re}\left(\frac{f(z)}{z}\right)\geq \int_{0}^{1}\frac{1-t^{1-\beta}}{1+t^{1-\beta}}dt.
\end{align*}
Clearly, for the special case $\beta=0$, the class $\mathcal{A}_{\beta}$ reduces to the class $\mathcal{R}$. Elin \emph{et al.} \cite{Elin-Shoikhet-Tuneski-2020} solved the radii problems for $\mathcal{A}_{\beta}$; specifically, they determined the radii $r \in (0, 1)$ such that $f(rz)/r \in \mathcal{S}^*$ for any $f \in \mathcal{A}_{\beta}$, as well as for other subclasses of starlike functions. This investigation is motivated by the fact that there is no inclusion relationship between $\mathcal{S}^*$ and $\mathcal{A}_{\beta}$ (i.e., $\mathcal{S}^* \not\subset \mathcal{A}_{\beta}$ and $\mathcal{A}_{\beta} \not\subset \mathcal{S}^*$). Generalizing this work, Giri and Kumar \cite{Giri-Kumar-2022} obtained the radius $r$ such that $f(rz)/r$ belongs to the unified subclass $\mathcal{S}^*(\varphi)$, where $\varphi$ is a univalent function mapping the unit disk onto a specific domain in the right half-plane.\vspace{1.2mm}

{The Gauss hypergeometric function (or ordinary hypergeometric function), denoted as $_2F_1(a, b; c; z)$, is one of the most important functions in mathematics. It generalizes many elementary and special functions, including logarithms, trigonometric functions, and Jacobi polynomials.} The hypergeometric function $_2F_1$ is defined by the following power series for $|z| < 1$:
\begin{align*}
	_2F_1(a, b; c; z) = \sum_{n=0}^{\infty} \frac{(a)_n (b)_n}{(c)_n} \frac{z^n}{n!},
\end{align*}
where
\begin{enumerate}
	\item[(a)] $(q)_n$ is the Pochhammer symbol (rising factorial), defined as $(q)_n = q(q+1)\dots(q+n-1)$.\vspace{1.2mm}
	
	\item[(b)] $a, b, c$ are complex parameters (where $c$ cannot be a non-positive integer) and $z$ is a complex variable.\vspace{1.2mm}
	
	\item[(c)] If either $a$ or $b$ is a non-positive integer (e.g., $-k$), the series terminates after $k+1$ terms, turning the function into a polynomial.
\end{enumerate}
{ We recall the following lemma from \cite{Giri-Kumar-RMJ-2025}, which provides sharp coefficient estimates for functions in the class $\mathcal{A}_{\beta}$.}
\begin{lemA}\cite[Theorem 2.1]{Giri-Kumar-RMJ-2025}
	If $f\in \mathcal{A}_{\beta}$ is of the form \eqref{Eq-2.1}, then 
	\begin{align}\label{Eq-2.3}
		|a_n|\leq \frac{2}{n-\beta(n-1)}\; \mbox{for}\; n\geq 2.
	\end{align}
	Further, this inequality is sharp for each $n$.
\end{lemA}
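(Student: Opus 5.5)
The plan is to convert the defining condition of $\mathcal{A}_{\beta}$ into a Carathéodory function and compare coefficients. For $f\in\mathcal{A}_{\beta}$ of the form \eqref{Eq-2.1}, set
\begin{align*}
	p(z):=\beta\frac{f(z)}{z}+(1-\beta)f'(z).
\end{align*}
Then $p(0)=\beta+(1-\beta)=1$ and ${\rm Re}\,p(z)>0$ in $\mathbb{D}$, so $p$ lies in the Carathéodory class. Writing $p(z)=1+\sum_{n=1}^{\infty}c_nz^n$, I will use the classical Carathéodory bound $|c_n|\leq 2$ for all $n\geq 1$. It follows directly from the Herglotz representation $p(z)=\int_{\partial\mathbb{D}}\frac{1+\bar{\zeta}z}{1-\bar{\zeta}z}\,d\mu(\zeta)$ with $\mu$ a probability measure, which gives $c_n=2\int\bar{\zeta}^n d\mu$.

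Next I expand both terms. We have $f(z)/z=1+\sum_{n\geq2}a_nz^{n-1}$ and $f'(z)=1+\sum_{n\geq2}na_nz^{n-1}$. Hence the coefficient of $z^{n-1}$ in $p$ is
\begin{align*}
	\bigl(\beta+(1-\beta)n\bigr)a_n=\bigl(n-\beta(n-1)\bigr)a_n=c_{n-1}.
\end{align*}
Since $n-\beta(n-1)\geq 1>0$ for $\beta\in[0,1]$ and $n\geq 2$, we can divide to get
\begin{align*}
	|a_n|=\frac{|c_{n-1}|}{n-\beta(n-1)}\leq\frac{2}{n-\beta(n-1)},
\end{align*}
which is \eqref{Eq-2.3}.

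For sharpness, I take $p(z)=\frac{1+z}{1-z}=1+2\sum_{k\geq1}z^k$ and solve $\beta f/z+(1-\beta)f'=p$ coefficientwise. This gives the extremal function
\begin{align*}
	f(z)=z+\sum_{n\geq2}\frac{2}{n-\beta(n-1)}z^n.
\end{align*}
The series is analytic in $\mathbb{D}$ because its coefficients are bounded. By construction $f\in\mathcal{A}$, and ${\rm Re}\bigl(\beta f/z+(1-\beta)f'\bigr)={\rm Re}\,\frac{1+z}{1-z}>0$, so $f\in\mathcal{A}_{\beta}$. Equality then holds in \eqref{Eq-2.3} for every $n$.

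The one point needing care is the case $\beta=1$. There the denominator equals $1$ and the class is $\{{\rm Re}\,f(z)/z>0\}$, and the argument above goes through unchanged. Otherwise the proof is a routine coefficient comparison; the only substantive input is the Carathéodory lemma.
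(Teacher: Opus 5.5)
Your proof is correct and follows the standard argument. The paper only cites this lemma from Giri--Kumar, but it relies on exactly your coefficient relation $(n-\beta(n-1))a_n=c_{n-1}$ (written out in \eqref{eq-3.3} for $n=2,3,4$) together with $|c_n|\le 2$. Your extremal function coming from $p(z)=(1+z)/(1-z)$ is precisely $\tilde{f}$ of \eqref{Eq-2.4} written as a power series, and unlike the hypergeometric expression, the series form still makes sense at $\beta=1$.
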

The function $\tilde{f} : \mathbb{D}\to\mathbb{C}$ defined by 
\begin{align}\label{Eq-2.4}
	\tilde{f}(z)=z\left(-1+2\left(_2F_1\left[ 1, \frac{1}{1-\beta}, \frac{2-\beta}{1-\beta}, z\right]\right)\right)=z+\sum_{n=2}^{\infty}\frac{2}{n-\beta(n-1)}z^n
\end{align}
satisfies the condition 
\begin{align*}
	{\rm Re}\left(\beta \tilde{f}(z)/z+(1-\beta)\tilde{f}'(z)\right)>0.
\end{align*}
Hence, $\tilde{f} \in \mathcal{A}_{\beta}$, where $_2F_1$ denotes the Gauss hypergeometric function. Equality in \eqref{Eq-2.3} is attained for $\tilde{f}$, which confirms the sharpness of the bound.\vspace{1.2mm}

{Next, we recall the sharp growth theorem for functions in the class $\mathcal{A}_{\beta}$, which will be instrumental in studying Bohr radius problems and determining the Euclidean distance $d(0, \partial f(\mathbb{D}))$.}
\begin{lemB}\cite[Theorem 5.1]{Giri-Kumar-RMJ-2025}
If $f\in \mathcal{A}_{\beta}$ is of the form \eqref{Eq-2.1}, then for $|z|\leq r$, the following hold:
\begin{enumerate}
	\item[(i)] $-\dfrac{\tilde{f}(-r)}{r}\leq {\rm Re}\left(\dfrac{f(z)}{z}\right)\leq\dfrac{\tilde{f}(r)}{r}$,\vspace{1.2mm} 
	\item[(ii)] $-\tilde{f}(-r)\leq |f(z)|\leq \tilde{f}(r)$,
\end{enumerate}
where $\tilde{f}(z)$ is given by \eqref{Eq-2.4}. All these estimates are sharp.
\end{lemB}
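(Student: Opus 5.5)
The statement just above is Lemma B, the sharp growth theorem for $\mathcal{A}_{\beta}$. The plan is to reduce it to a subordination statement with a Carathéodory function and then integrate along rays.

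\textbf{Integral representation.} For $f\in\mathcal{A}_{\beta}$ with $0\le\beta<1$, set
\[
p(z)=\beta\frac{f(z)}{z}+(1-\beta)f'(z).
\]
Then $p(0)=1$ and ${\rm Re}\,p>0$, so $p\prec \frac{1+z}{1-z}$. Write $g(z)=f(z)/z$. Since $f'(z)=g(z)+zg'(z)$, we get $p=g+(1-\beta)zg'$, which is a first-order linear ODE for $g$. Solving it with $g(0)=1$ gives
\[
\frac{f(z)}{z}=\frac{1}{1-\beta}\int_0^1 t^{\frac{\beta}{1-\beta}}\,p(tz)\,dt
=\int_0^1 p\bigl(s^{1-\beta}z\bigr)\,ds ,
\]
using the substitution $s=t^{1/(1-\beta)}$. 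For the extremal function $p_0(z)=(1+z)/(1-z)$, expanding termwise yields $1+\sum_{n\ge1}\frac{2}{1+n(1-\beta)}z^n=\tilde f(z)/z$, in agreement with \eqref{Eq-2.4}.

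\textbf{Part (i).} For $|z|\le r$, the Harnack-type estimate for ${\rm Re}\,p$ gives
\[
\frac{1-|w|}{1+|w|}\le {\rm Re}\,p(w)\le \frac{1+|w|}{1-|w|}.
\]
Apply it with $w=s^{1-\beta}z$, where $|w|\le s^{1-\beta}r$, and integrate over $s\in[0,1]$. The two resulting integrals are exactly $\tilde f(r)/r$ and $-\tilde f(-r)/r$. Equality is attained for $f=\tilde f$ at $z=\pm r$.

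\textbf{Part (ii).} The upper bound follows directly: $|p(w)|\le \frac{1+|w|}{1-|w|}$, so $|f(z)/z|\le \tilde f(r)/r$, and multiplying by $|z|\le r$ gives $|f(z)|\le\tilde f(r)$.

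The lower bound is the main obstacle, because the rotation by $|z|$ must be handled. I would argue as follows:
\[
|f(z)|=|z|\left|\frac{f(z)}{z}\right|\ge |z|\,{\rm Re}\frac{f(z)}{z}\ge |z|\cdot\frac{-\tilde f(-|z|)}{|z|}=-\tilde f(-|z|),
\]
where the last step is part (i) applied with $r=|z|$. It then remains to check that $\rho\mapsto -\tilde f(-\rho)$ is increasing on $(0,1)$. Its derivative is $\tilde f'(-\rho)$, and
\[
\tilde f'(-\rho)=\int_0^1 \frac{1-\rho^2 s^{2(1-\beta)}+\cdots}{\cdots}\,ds ,
\]
which is easier to handle by noting that $\tilde f'(x)=(p_0(x)-\beta\tilde f(x)/x)/(1-\beta)$. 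On the negative axis this quantity is positive by the direct series comparison $\tilde f'(-\rho)=1+\sum_n (-1)^{n-1}\frac{2n}{n-\beta(n-1)}\rho^{n-1}$. An alternating series with terms that are decreasing in magnitude is not automatic here, so positivity will instead be shown from
\[
\tilde f'(-\rho)=\int_0^1 \frac{d}{d\rho}\Bigl(\rho\,\frac{1-s^{1-\beta}\rho}{1+s^{1-\beta}\rho}\Bigr)ds
=\int_0^1 \frac{1-2s^{1-\beta}\rho-s^{2(1-\beta)}\rho^2}{(1+s^{1-\beta}\rho)^2}\,ds .
\]
This integral is not obviously positive for $\rho$ near $1$. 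If positivity fails there, I would use the alternative route: integrate $|f(z)|\ge\int_0^{|z|}|f'(t e^{i\theta})|\,dt$ along the preimage of the segment to the nearest boundary point, as in standard distortion proofs. This route relies on a lower bound for ${\rm Re} f'$, obtained in the same way from ${\rm Re}\,p$ and ${\rm Re}(f/z)$.

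Sharpness in (ii) is attained by $\tilde f$ at $z=\pm r$.
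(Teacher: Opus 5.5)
The paper gives no proof of Lemma B (it is quoted from Giri--Kumar), so your argument is judged on its own. The integral representation $f(z)/z=\int_0^1 p(s^{1-\beta}z)\,ds$, both bounds in (i), and the upper bound in (ii) are correct on the whole disk $|z|\le r$. This is because $\tilde f(\rho)$ and $\tilde f(\rho)/\rho$ increase, while $-\tilde f(-\rho)/\rho=\int_0^1\frac{1-s^{1-\beta}\rho}{1+s^{1-\beta}\rho}\,ds$ decreases in $\rho$.

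The gap is the lower bound in (ii), and it originates in the statement itself: with $|z|\le r$ it is false. At $z=0$ one has $|f(0)|=0<-\tilde f(-r)$ for every $r\in(0,1)$. The correct version holds on the circle $|z|=r$, or equivalently $|f(z)|\ge-\tilde f(-|z|)$ for all $z\in\mathbb{D}$. There your chain $|f(z)|\ge|z|\,{\rm Re}(f(z)/z)\ge-\tilde f(-r)$ already completes the proof, with no monotonicity needed. This circle version is also the only way the lower bound is used later in the paper, namely as $\liminf_{|z|\to1^-}|f(z)|\ge-\tilde f(-1)$.

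Drop the detour you flag as the main obstacle: it is misdirected, and its key claim is false. To pass from $-\tilde f(-|z|)$ to $-\tilde f(-r)$ for $|z|\le r$, you would need $\rho\mapsto-\tilde f(-\rho)$ to be decreasing. That fails near $0$, since $\tilde f'(0)=1$.

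The map is not increasing either. By dominated convergence, your integral $\int_0^1\frac{1-2s^{1-\beta}\rho-s^{2(1-\beta)}\rho^2}{(1+s^{1-\beta}\rho)^2}\,ds$ tends, as $\rho\to1^-$, to $\int_0^1\bigl(2(1+s^{1-\beta})^{-2}-1\bigr)\,ds$. This limit is $0$ for $\beta=0$ and strictly negative for every $\beta\in(0,1]$, because $s^{1-\beta}>s$ on $(0,1)$. For $\beta=1$ it is explicit: $\tilde f(z)=z(1+z)/(1-z)$, and $-\tilde f(-\rho)=\rho(1-\rho)/(1+\rho)$ peaks at $\rho=\sqrt2-1$.

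Your fallback of integrating $|f'|$ along the preimage of a segment needs univalence, or at least a positive lower bound for $|f'|$. The class $\mathcal{A}_\beta$ has neither: for $\beta=1$, $\tilde f'(z)=(1+2z-z^2)/(1-z)^2$ vanishes at $z=1-\sqrt2$.
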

In this article, we establish sharp results for several problems in geometric function theory. Specifically, we determine the sharp Bohr radius involving multiple Schwarz functions, derive the sharp Fekete–Szegő inequality, and obtain sharp bounds for the moduli differences of logarithmic coefficients for both functions and their inverses.The paper is organized as follows. Section 2 is devoted to determining the sharp Bohr radius involving multiple Schwarz functions for the class $\mathcal{A}_{\beta}$. In Section 3, we derive the sharp Fekete–Szegő inequality for functions within the same class. Section 4 establishes sharp inequalities for the moduli differences of logarithmic coefficients of functions and their inverses belonging to $\mathcal{A}_{\beta}$. Detailed proofs and the necessary background material are provided within their respective sections. \vspace{2mm}

\section{\bf Refined Bohr and Bohr-Rogosinski phenomena for the class $\mathcal{A}_{\beta}$}
Initial exploration of this problem was conducted by Harald Bohr as he delved into the absolute convergence of Dirichlet series of the form $\sum a_n n^{-s}$. However, in recent years, it has evolved into a thriving domain of research within modern function theory. The theorem has drawn considerable interest because it plays a crucial role in solving the characterization problem for Banach algebras that satisfy the von Neumann inequality \cite{Dixon & BLMS & 1995}. In $1914$, Herald Bohr \cite{Bohr-1914} states that if $H_\infty$ denotes the class of all bounded analytic functions $f$ on $\mathbb{D}$, then
\begin{align}\label{Eq-1.1}
	B_0(f,r):=|a_0|+\sum_{n=1}^{\infty}|a_n|r^n\leq||f||_\infty \;\mbox{for}\; 0\leq r\leq\frac{1}{3},
\end{align}
where $||f||_\infty:=\displaystyle\sup_{z\in\mathbb{D}}|f(z)|$ and $a_k=f^{(k)}(0)/k!$ for $k\geq0$. Initially, Bohr showed the inequality \eqref{Eq-1.1} holds for $|z|\leq1/6$,  and later  M. Riesz, I. Shur and F. W. Wiener, independently proved its validity on a wider range $0\leq r\leq 1/3$ and the number $1/3$ is optimal as seen by analyzing suitable members of the conformal automorphism of the unit disk. However, this compelling inequality has been substantiated by several distinct proofs presented in various articles (see \cite{Sidon-1927, Paulsen-2002, Tomic-1962}).\vspace{2mm}

In $1997$, Boas and Khavinson \cite{Boas-Khavinson-1997} were the first to introduce the concept of the n-dimensional Bohr radius ${K}^{\infty}_n$, and they established the following result: for every positive integer $n$, the $n$-dimensional Bohr radius $K^{\infty}_n$ with $n\geq 2$ satisfies
\begin{align*}
	\frac{1}{3\sqrt{n}}<K^{\infty}_n<2\sqrt{\frac{\log n}{n}}.
\end{align*}
Ongoing research is currently focused on extending Bohr's theorem into broader and more generalized forms. The generalization of Bohr's theorem is now an active area of research. The interest in the Bohr phenomena was also revived in the $ 1990 $s due to the extensions to holomorphic functions of several complex  variables and to more abstract settings. Since then, number of variety of results on Bohr’s theorem in higher dimension appeared recently. For different aspects of the Bohr inequality, interested readers can refer to  \cite{Aizn-PAMS-2000, Alkhaleefah-PAMS-2019,Bayart-2014,Lata-Singh-PAMS-2022,Liu-Ponnusamy-PAMS-2021}. Multiple investigations into the Bohr phenomenon have yielded a range of outcomes, as indicated in studies like  (see \textit{e.g.} \cite{Ahamed-AASFM-2022,Beneteau-2004}). Additionally, Liu \textit{et al.}  have presented more sophisticated iterations of the Bohr inequality \cite{Liu-Liu-Ponnusamy-2021}. The Bohr inequality has been explored within various sub-classes of analytic and harmonic mappings by numerous authors (see \textit{e.g.} \cite{Ahamed-AMP-2021, Ahamed-CVEE-2021, Bhowmik-2018, Huang-Liu-Ponnusamy-MJM-2021}). For a comprehensive account of recent advances related to the Bohr inequality, the reader is referred to \cite{Ahamed-RMJM-2021, Das-JMAA-2022, Kumar-Sahoo-MJM-2021, Liu-JMAA-2021, Kumar-CVEE-2023, Kumar-Manna-JMAA-2023, Kumar-PAMS-2023,Ahamed-CMFT-2023,Ahamed-RM-2023}.

\begin{defi}
We say that, the class $\mathcal{A}_{\beta}$ satisfies the Bohr phenomenon if there exists $r_b\in (0, 1)$ such that 
\begin{align*}
	|z|+\sum_{n=2}^{\infty}|a_n||z|^n\leq d(f(0), \partial f(\mathbb{D}))
\end{align*}
holds in $|z|=r\leq r_b$, where $\partial f(\mathbb{D})$ is the boundary of image domain of $\mathbb{D}$ under $f$ and $d$ denotes the Euclidean distance between $f(0)$ and $\partial f(\mathbb{D})$. The radius $r_b$ is called the Bohr radius for the class $\mathcal{A}_{\beta}$ and it is said to be best possible, if there exists $g\in \mathcal{A}_{\beta}$ such that 
\begin{align*}
	|z|+\sum_{n=2}^{\infty}|a_n||z|^n> d(f(0), \partial f(\mathbb{D}))\; \mbox{for}\; |z|=r>r_b.
\end{align*}
\end{defi}
The Bohr phenomenon for univalent and convex functions was investigated by Abu-Muhanna \cite{Abu-CVEE-2010}, who proved that the Bohr radius is $3-2\sqrt{2}$ for the former and $1/3$ for the latter. Further details and developments can be found in the survey by Abu-Muhanna et al. \cite{Abu-Ali-Ponnusamy-2017}.\vspace{1.2mm}

In addition to the Bohr radius, there is the related concept of the Rogosinski radius, although it has been studied less extensively in the literature \cite{Landau-Gaier-1986, Rogosinski-1923}. Specifically, if $f(z) = \sum_{n=0}^{\infty} a_n z^n \in \mathcal{B}$, then the partial sum satisfies the inequality
\begin{align*}
	\sum_{n=0}^{N-1} |a_n| |z|^n \leq 1, \quad (N \in \mathbb{N})
\end{align*}
in the disk $|z| = r \leq 1/2$. The value $1/2$ is known as the Rogosinski radius. Recently, Kayumov \emph{et al.} \cite{Kay-Kha-Pon-JMAA-2021} investigated the Bohr–Rogosinski sum, defined as
\begin{align*}
	R^f_N(z) := |f(z)| + \sum_{n=N}^{\infty} |a_n| |z|^n,
\end{align*}
and determined the radius $r_N$ such that $R^f_N(z) \leq 1$ for $|z| \leq r_N$ for Cesàro operators on the space of bounded analytic functions. The largest such $r_N$ is referred to as the Bohr–Rogosinski radius.
\begin{defi}
	The class $\mathcal{A}_{\beta}$ is said to satisfy the Bohr-Rogosinski phenomenon if there exists $r_N$ such that 
	\begin{align*}
		|f(z^m)|+\sum_{n=N}^{\infty}|a_n||z|^n\leq d(f(0), \partial f(\Omega)),\; m, N\in\mathbb{N}
	\end{align*}
	holds in $|z|=r\leq r_N$.
\end{defi}
We now recall a result by Giri and Kumar \cite{Giri-Kumar-RMJ-2025}, who investigated the Bohr radius for the class $\mathcal{A}_{\beta}$ and obtained the following theorem in terms of the Euclidean distance $d(0, \partial f(\mathbb{D}))$.
\begin{thmA}\cite[Theorem 5.2]{Giri-Kumar-RMJ-2025}
If $f\in \mathcal{A}_{\beta}$ is of the form \eqref{Eq-2.1}, then for $m\in\mathbb{N}$
\begin{align*}
	|\omega\left(z^m\right)|+\sum_{n=2}^{\infty}|a_n||z|^n\leq d(0, \partial f(\mathbb{D}))
\end{align*}
in $|z|\leq r^*$, where $r^*$ is the smallest positive root in $(0, 1)$ of 
\begin{align*}
	r^m+\tilde{f}(r)-r+\tilde{f}(-1)=0.
\end{align*}
The radius $r^*$ is sharp.
\end{thmA}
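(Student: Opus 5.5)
We read $\omega$ as a Schwarz function, i.e.\ $\omega$ analytic in $\mathbb{D}$ with $\omega(0)=0$ and $|\omega(z)|\le |z|$. This is the reading under which the defining equation contains the term $r^m$. The plan is to bound the left-hand side term by term, using Lemma A and the Schwarz lemma, by a function of $r$ alone. We then bound $d(0,\partial f(\mathbb{D}))$ from below by a constant using Lemma B. Finally we check sharpness on the extremal function $\tilde f$ from \eqref{Eq-2.4}.

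\emph{Step 1 (lower bound for the distance).} Since $f(0)=0$, we have $d(0,\partial f(\mathbb{D}))\ge \liminf_{|z|\to1}|f(z)|$. Lemma B(ii) gives $|f(z)|\ge -\tilde f(-|z|)$. Letting $|z|\to 1^-$ yields
\begin{align*}
 d(0,\partial f(\mathbb{D}))\ge -\tilde f(-1).
\end{align*}
Here $\tilde f(-1)=-1+\sum_{n\ge2}(-1)^n\, 2/(n-\beta(n-1))$ is finite. Indeed, the coefficients $2/(n(1-\beta)+\beta)$ decrease to $0$ for $\beta<1$, so the alternating series converges. The degenerate case $\beta=1$ should be handled separately or excluded.

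\emph{Step 2 (upper bound for the sum).} By the Schwarz lemma, $|\omega(z^m)|\le r^m$ for $|z|=r$. By Lemma A,
\begin{align*}
 \sum_{n\ge2}|a_n|r^n\le \sum_{n\ge2}\frac{2}{n-\beta(n-1)}\,r^n=\tilde f(r)-r.
\end{align*}
Hence the left-hand side is at most $r^m+\tilde f(r)-r$. Define $G(r)=r^m+\tilde f(r)-r+\tilde f(-1)$. The function $r^m+\tilde f(r)-r$ has nonnegative Taylor coefficients, so $G$ is increasing on $(0,1)$. Moreover $G(0)=\tilde f(-1)<0$, and $G(r)\to 1+\tilde f(1^-)+\tilde f(-1)-1>0$ as $r\to 1^-$. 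The positivity of this limit needs a check: it amounts to $\tilde f(1^-)>-\tilde f(-1)$, which holds because the even-indexed terms dominate. Therefore $G$ has a unique root $r^*\in(0,1)$, and $G(r)\le 0$ for $r\le r^*$. Combining with Step 1 gives the inequality for $|z|\le r^*$.

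\emph{Step 3 (sharpness).} Take $f=\tilde f$ and $\omega(z)=z$, and evaluate at $z=r\in(r^*,1)$. The left-hand side then equals exactly $r^m+\tilde f(r)-r$, since all coefficients of $\tilde f$ are positive. It remains to show $d(0,\partial\tilde f(\mathbb{D}))=-\tilde f(-1)$. As $t\to1^-$, the real points $\tilde f(-t)$ tend to the real negative number $\tilde f(-1)$. Hence $\tilde f(-1)\in\overline{\tilde f(\mathbb{D})}$.

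The main obstacle is showing that $\tilde f(-1)$ is a boundary point, i.e.\ that it is not attained inside the image. Together with Step 1 this gives equality of the distance. The first thing I would try is to show that $t\mapsto-\tilde f(-t)$ is strictly increasing. Then, for $|z|<1$, Lemma B(ii) gives $|\tilde f(z)|\ge-\tilde f(-|z|)$, which would need to be strictly less than $-\tilde f(-1)$ only along the real segment. A cleaner route is to use that $\tilde f(-1)$ is a limit of image points while, by Step 1, every image point has modulus at least $-\tilde f(-|z|)$, and to argue that the disk $|w|<-\tilde f(-1)$ lies in the image. Strict monotonicity would come from $\tilde f'(-t)>0$, since $\frac{d}{dt}\bigl(-\tilde f(-t)\bigr)=\tilde f'(-t)$. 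This should follow from ${\rm Re}\,\bigl(\beta\tilde f(z)/z+(1-\beta)\tilde f'(z)\bigr)>0$ on the real axis together with the integral representation of $\tilde f$. With this equality established, $G(r)>0$ for $r>r^*$ shows that the inequality fails, so $r^*$ is best possible.
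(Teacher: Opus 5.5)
Your Steps 1 and 2 match the paper's argument. This statement is the case $p=1$, $F\equiv0$, $\omega_n(z)=z^n$ of Theorem~\ref{Th-2.1}: Lemma B gives $d(0,\partial f(\mathbb{D}))\ge-\tilde f(-1)$, Lemma A with Schwarz's lemma bounds the majorant, and monotonicity of the auxiliary function locates the root. These steps are fine for $0\le\beta<1$. You are right that $\beta=1$ degenerates, since then $\tilde f(z)=z(1+z)/(1-z)$ and $\tilde f(-1)=0$.

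The gap is in Step 3, and it is not just a missing argument: the monotonicity you want is false for $0<\beta<1$. The function $\tilde f$ satisfies $\beta\tilde f(z)/z+(1-\beta)\tilde f'(z)=\frac{1+z}{1-z}$. Hence $\tilde f'(x)=\frac{1}{1-\beta}\bigl(\frac{1+x}{1-x}-\beta\frac{\tilde f(x)}{x}\bigr)$. Let $x\to-1^{+}$. The first term tends to $0$, and $\tilde f$ is continuous at $-1$ by the representation $\tilde f(z)=z\int_0^1\frac{1+zt^{1-\beta}}{1-zt^{1-\beta}}\,dt$. This gives $\tilde f'(-1^{+})=\frac{\beta}{1-\beta}\tilde f(-1)<0$. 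So positivity of ${\rm Re}\bigl(\beta\tilde f/z+(1-\beta)\tilde f'\bigr)$ on the real axis does not force $\tilde f'>0$. Consequently:
\begin{itemize}
\item $t\mapsto-\tilde f(-t)$ decreases near $t=1$;
\item $\tilde f'$ vanishes somewhere on $(-1,0)$, so $\tilde f$ is not univalent;
\item since $\tilde f(x)<\tilde f(-1)<0=\tilde f(0)$ for $x$ near $-1$, the intermediate value theorem gives $x_1\in(-1,0)$ with $\tilde f(x_1)=\tilde f(-1)$.
\end{itemize}
Thus $\tilde f(-1)$ lies in the open set $\tilde f(\mathbb{D})$ and is not a boundary point. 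For example, for $\beta=1/2$ one has $\tilde f(x)=-x-4-4\log(1-x)/x$. Then $\tilde f(-1)=4\log 2-3\approx-0.227$, while $\tilde f(-0.7)\approx-0.268$.

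In fact $d(0,\partial\tilde f(\mathbb{D}))>-\tilde f(-1)$ strictly. Any $w\in\partial\tilde f(\mathbb{D})$ is a limit of $\tilde f(z_n)$ with $z_n\to\zeta\in\partial\mathbb{D}$. Apply Fatou's lemma to $|\tilde f(z)|\ge|z|\int_0^1{\rm Re}\,\frac{1+zt^{1-\beta}}{1-zt^{1-\beta}}\,dt$. This gives $|w|\ge\int_0^1\frac{1-t^{2(1-\beta)}}{|1-\zeta t^{1-\beta}|^{2}}\,dt\ge-\tilde f(-1)$. Equality holds only if $\zeta=-1$, i.e.\ $w=\tilde f(-1)$, which was just excluded. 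Since $\partial\tilde f(\mathbb{D})$ is closed, the distance exceeds $-\tilde f(-1)$.

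Hence for $|z|=r$ slightly above $r^*$, the left-hand side for $\tilde f$ is at most $r^m+\tilde f(r)-r$, whatever the Schwarz function. That is still below $d(0,\partial\tilde f(\mathbb{D}))$, so $\tilde f$ does not witness sharpness at all.

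The paper's proof of Theorem~\ref{Th-2.1}, like the cited source, simply asserts $-\tilde f(-1)=d(0,\partial\tilde f(\mathbb{D}))$, so it has the same hole. Your Step 3 does work in two situations:
\begin{itemize}
\item For $\beta=0$, where ${\rm Re}\,\tilde f'(z)={\rm Re}\,\frac{1+z}{1-z}>0$ makes $\tilde f$ univalent. Then $\lim_{t\to1^-}\tilde f(-t)$ cannot lie in $\tilde f(\mathbb{D})$.
\item For every $\beta<1$, if $d(0,\partial f(\mathbb{D}))$ is replaced by $\liminf_{|z|\to1}|f(z)|$.
\end{itemize}
Your ``cleaner route'' of showing that the disk $|w|<-\tilde f(-1)$ lies in the image only reproves the lower bound of Step 1. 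That is the wrong direction for sharpness.
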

\begin{rem}\cite[Remark 1]{Kay-Pon-2018-CR}
If $f$ is a univalent function, then $S_{r}$ is the area of the image of the subdisk $\mathbb{D}_r=\{z\in\mathbb{C} : |z|<r\}$ under the mapping $f$. In the case of multivalent function, $S_{r}$ is greater than the area of the image of the subdisk $\mathbb{D}_r.$ This fact could be shown by noting that
\begin{align*}
	S_{r}=\int_{f(\mathbb{D}_{r})}|f^{\prime}(z)|^{2}dA(w)=\int_{f(\mathbb{D}_{r})}v_{f}(w)dA(w)\ge\int_{f(\mathbb{D}_{r})}dA(w)=Area(f(\mathbb{D}_{r})),
\end{align*}
and $\nu_{f}(w)=\sum_{f(z)=w}1$ denotes the counting function of $f$.
\end{rem}
{Let $H^\infty$ denote the class of all bounded analytic functions in the unit disk $\mathbb{D} := \{z \in \mathbb{C} : |z| < 1\}$ with the supremum norm $\|f\|_\infty := \sup_{z \in \mathbb{D}} |f(z)|$. Prior to proceeding with the discussion, it is necessary to establish certain notations. Let $\mathcal{B} = \{ f \in H^\infty : \|f\|_\infty \le 1 \}$ and $m \in \mathbb{N}$, let 
	\[
	\mathcal{B}_m = \left\{ \omega \in \mathcal{B} : \omega(0) = \omega'(0) = \dots = \omega^{(m-1)}(0) = 0 \text{ and } \omega^{(m)}(0) \neq 0 \right\}.
	\]}
Functions in the class $\mathcal{B}_m$ are called Schwarz functions having multiple zeros. Determining sharp improvements to the Bohr radius remains a formidable challenge in complex analysis, as it necessitates identifying the optimal radius for a specific class of functions. The quantity $S_r/\pi$, representing the normalized area of the image of the subdisk $|z|<r$ under a mapping $f$, has become central to refining Bohr-type inequalities for analytic self-maps of the unit disk. However, there are fundamental constraints on its application. As established in \cite[Remark 2]{Ism-Kay-Pon-JMAA-2020} of Ismagilov \textit{et al.} one cannot indefinitely append positive terms to the majorant series. Specifically, for any function $F : [0, \infty)\to [0, \infty)$ with $F(t) > 0$ for $t > 0$, the inequality:
\begin{align*}
	\sum_{k=0}^{\infty}|a_{k}|r^{k}+\frac{16}{9}\left(\frac{S_{r}}{\pi}\right)+\lambda\left(\frac{S_{r}}{\pi}\right)^{2}+F(S_{r})\leq 1
\end{align*}
fails for $r \le 1/3$. This limitation arises because the equality is already achieved by a concrete extremal function, leaving no room for additional strictly positive terms without violating the unit bound. Given these constraints, recent developments suggest that incorporating multiple Schwarz functions in $\mathcal{B}_m$ into the Bohr inequality represents a novel trajectory for generalizing classical results. Consequently, it is pertinent to investigate Bohr-type inequalities that integrate multiple Schwarz functions with functional expressions of $S_r/\pi$ within the class $\mathcal{A}_{\beta}$.\vspace{2mm}

The novelty of this work lies in its multifaceted refinement of the Bohr phenomenon for the class $\mathcal{A}_{\beta}$, achieved through three primary advancements. First, we provide a generalization of area functionals; while existing literature primarily restricts Bohr refinements to polynomial expressions of the area functional $S_r/\pi$ (often limited to degree 2), this paper introduces a general monotone increasing functional expression $F(S_r/\pi)$, marking a significant step forward in the study of Bohr-type inequalities. Second, through the integration of multiple Schwarz functions in $\mathcal{B}_m$, this work generalizes the classical inequality by incorporating a sequence of distinct Schwarz functions $\{\omega_n(z)\}_{n \ge 2}$ for the coefficient terms—rather than utilizing a single Schwarz function as in previous studies—thereby providing a more robust geometric framework for the class $\mathcal{A}_{\beta}$. Finally, by adopting a dynamic distance metric approach, the paper overcomes the rigid constraints of the unit bound by employing a Euclidean distance formulation $d(0, \partial f(\mathbb{D}))$, which allows for the inclusion of additional positive geometric terms without violating the fundamental Bohr inequality.\vspace{1.2mm}

Our objective is to establish sharp Bohr and Bohr–Rogosinski radii for the class $\mathcal{A}_{\beta}$ involving multiple Schwarz functions and general area functionals. Thus, it is natural to investigate the following problem.
\begin{prob}\label{Q-2.1}
Let $F : [0, \infty)\to [0, \infty)$ be a function with some property. Find the improved Bohr radius for the class $\mathcal{A}_{\beta}$ involving multiple Schwarz functions with a functional expression of $S_r/\pi$.
\end{prob}
In this section, we provide an affirmative answer to Problem \ref{Q-2.1} and present an improved version of Theorem A. By integrating the geometric properties of the area functional with a sequence of distinct Schwarz functions, we establish a more generalized version of the Bohr inequality for the class $\mathcal{A}_{\beta}$. Our results demonstrate that the Bohr radius remains sharp even with the inclusion of these additional geometric terms in the inequality.
\begin{thm}\label{Th-2.1}
	If $f\in \mathcal{A}_{\beta}$ is of the form \eqref{Eq-2.1} and $F : [0, \infty)\to [0, \infty)$ is an monotone increasing function with $F(0)=0$, then for $m\in\mathbb{N}$, $p>0$, the inequality
	\begin{align*}
		\mathcal{M}_f(z, r):=|\omega_m\left(z\right)|^p+\sum_{n=2}^{\infty}|a_n||\omega_n(z)|+F\left(\frac{S_r}{\pi}\right)\leq d(0, \partial f(\mathbb{D}))
	\end{align*}
	holds for $|z|\leq R^m_{\beta, p}$, where $R^m_{\beta, p}$ is the smallest positive root in $(0, 1)$ of 
	\begin{align*}
		r^{pm}+\tilde{f}(r)-r+F\left(r^2+\sum_{n=2}^{\infty}\frac{4nr^{2n}}{(n-\beta(n-1))^2}\right)+\tilde{f}(-1)=0.
	\end{align*}
	The radius $R^m_{\beta, p}$ is sharp.
\end{thm}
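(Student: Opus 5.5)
We plan to follow the standard scheme for Bohr-type results measured against a distance, as in Theorem A, using Lemma A, Lemma B and the Schwarz lemma for $\mathcal{B}_m$. Throughout, $\omega_m\in\mathcal{B}_m$ and each $\omega_n\in\mathcal{B}_n$ for $n\ge 2$; presumably the statement intends this, since it is what makes the exponents below come out right.

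The first step is a lower bound for the distance. By Lemma B(ii), $|f(z)|\geq -\tilde f(-|z|)$. Letting $|z|\to 1^-$ gives $d(0,\partial f(\mathbb{D}))\geq -\tilde f(-1)$. The series for $-\tilde f(-1)=1-\sum_{n\ge2}2(-1)^n/(n-\beta(n-1))$ converges as an alternating series, so this quantity is finite and positive.

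The second step bounds each term of $\mathcal{M}_f(z,r)$ for $|z|=r$. By the Schwarz lemma, $|\omega_m(z)|\le r^m$, hence $|\omega_m(z)|^p\le r^{pm}$. Likewise $|\omega_n(z)|\le r^n$, so Lemma A gives
\begin{align*}
\sum_{n\ge2}|a_n||\omega_n(z)|\le \sum_{n\ge2}\frac{2r^n}{n-\beta(n-1)}=\tilde f(r)-r.
\end{align*}
For the area term we use $S_r/\pi=\sum_{n\ge1} n|a_n|^2 r^{2n}$. With $a_1=1$ and Lemma A this gives
\begin{align*}
\frac{S_r}{\pi}\le r^2+\sum_{n\ge2}\frac{4n r^{2n}}{(n-\beta(n-1))^2}.
\end{align*}
Since $F$ is monotone increasing, $F(S_r/\pi)$ is at most $F$ of this right-hand side. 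Altogether, $\mathcal{M}_f(z,r)\le \Phi(r)+(-\tilde f(-1))$, where
\begin{align*}
\Phi(r):=r^{pm}+\tilde f(r)-r+F\Big(r^2+\sum_{n\ge2}\frac{4nr^{2n}}{(n-\beta(n-1))^2}\Big)+\tilde f(-1).
\end{align*}
Since $\Phi(0)=\tilde f(-1)<0$ and $\Phi$ is increasing, $\Phi(r)\le 0$ for $r\le R^m_{\beta,p}$. This yields the inequality $\mathcal{M}_f(z,r)\le -\tilde f(-1)\le d(0,\partial f(\mathbb{D}))$.

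The main obstacle is the existence of the root and sharpness. For existence, we need $\Phi(1^-)>0$. We have $\tilde f(1^-)=+\infty$ when $\beta<1$ (the series behaves like the harmonic series), so $\Phi\to\infty$ and a root exists. The case $\beta=1$ must be handled separately or excluded; it is degenerate.

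For sharpness we take $f=\tilde f$ from \eqref{Eq-2.4}, $\omega_m(z)=z^m$, $\omega_n(z)=z^n$, and $z=r$ real. Every estimate in the second step then becomes an equality, because $a_n>0$ equals the bound of Lemma A. It remains to check that $d(0,\partial\tilde f(\mathbb{D}))=-\tilde f(-1)$. For this, note that $\tilde f$ has real coefficients and that equality in Lemma B(ii) is attained at $z=-r$. So the boundary point $\tilde f(-1)$ lies in $\overline{\tilde f(\mathbb{D})}$ at distance $-\tilde f(-1)$, while all of $\partial\tilde f(\mathbb{D})$ lies at distance at least this by the first step. Hence equality holds. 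Then $\mathcal{M}_{\tilde f}(r,r)=\Phi(r)-\tilde f(-1)>-\tilde f(-1)=d(0,\partial \tilde f(\mathbb{D}))$ for $r>R^m_{\beta,p}$, because $\Phi$ is strictly increasing, its terms $r^{pm}$ and $\tilde f(r)-r$ being strictly increasing. This shows that $R^m_{\beta,p}$ cannot be improved.
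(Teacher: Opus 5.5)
Your argument for the inequality is the paper's argument: the distance bound from Lemma B, the coefficient and area bounds from Lemma A, monotonicity of $F$, and then $\tilde f$ with $\omega_n(z)=z^n$ for sharpness. In places it is cleaner than the paper.
\begin{itemize}
\item You make explicit that $\omega_n\in\mathcal{B}_n$ is needed for $|\omega_n(z)|\le r^n$.
\item You get strict monotonicity of $\Phi$ from $r^{pm}$ and $\tilde f(r)-r$, rather than from $F'$, which need not exist.
\item You flag $\tilde f(1^-)=+\infty$ and the degenerate case $\beta=1$.
\end{itemize}
Two small repairs are needed. First, the alternating-series remark gives finiteness but not positivity of $-\tilde f(-1)$: the leading-term estimate only bounds the sum by $2/(2-\beta)\ge 1$. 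Instead use $-\tilde f(-1)=\int_0^1\frac{1-t^{1-\beta}}{1+t^{1-\beta}}\,dt>0$, which follows from $\tilde f(z)/z=\int_0^1\frac{1+zt^{1-\beta}}{1-zt^{1-\beta}}\,dt$. Second, the intermediate value argument needs $F$ continuous; otherwise take existence of the root as a hypothesis.

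The genuine gap is the sharpness step $d(0,\partial\tilde f(\mathbb{D}))=-\tilde f(-1)$. You only show that $\tilde f(-1)$ lies in $\overline{\tilde f(\mathbb{D})}$. That does not make it a boundary point, and for $0<\beta<1$ it is not one.
\begin{itemize}
\item Let $z=-r\to-1$ in $\beta\tilde f(z)/z+(1-\beta)\tilde f'(z)=(1+z)/(1-z)$. This gives $\tilde f'(-1)=\frac{\beta}{1-\beta}\tilde f(-1)<0$.
\item Hence $\phi(r)=-\tilde f(-r)$ satisfies $\phi'(0)=1>0>\phi'(1)$, so $\phi$ exceeds $\phi(1)$ somewhere in $(0,1)$.
\item Since $\phi(0)=0<\phi(1)$, the intermediate value theorem gives $\tilde f(-1)=\tilde f(-s)$ for some $s\in(0,1)$. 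So $\tilde f(-1)$ is an interior point of the open set $\tilde f(\mathbb{D})$.
\end{itemize}
For example, with $\beta=1/2$ one has $\tilde f(z)=-z-4-4z^{-1}\log(1-z)$, giving $-\tilde f(-1)=3-4\log 2\approx 0.227$ but $-\tilde f(-1/2)\approx 0.256$.

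The strict form of the integral representation gives $|\tilde f(\zeta)|\ge{\rm Re}\,(\tilde f(\zeta)/\zeta)>-\tilde f(-1)$ for $\zeta\in\partial\mathbb{D}\setminus\{-1,1\}$. Also, $\partial\tilde f(\mathbb{D})$ is a closed subset of $\tilde f(\partial\mathbb{D}\setminus\{1\})$ not containing $\tilde f(-1)$. A compactness argument then gives $d(0,\partial\tilde f(\mathbb{D}))>-\tilde f(-1)$. So $\tilde f$ still satisfies the inequality for $r$ slightly larger than $R^m_{\beta,p}$, and it does not witness sharpness.

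The paper's proof makes the same assertion without justification, so you inherited this gap rather than introduced it. It is still real: your sharpness argument is valid only for $\beta=0$. There $\tilde f'(z)=(1+z)/(1-z)$ has positive real part, so $\tilde f$ is univalent and $\tilde f(-1)$ really lies on $\partial\tilde f(\mathbb{D})$. For $0<\beta<1$ a different extremal argument would be needed, and it is not clear that $R^m_{\beta,p}$ is sharp at all.
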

\begin{rem}
	Next, as an application of Theorem \ref{Th-2.1}, we define a polynomial $P_k(w)$ of degree $k \geq 1$ by
		\begin{align*}
			P_k(w) = \lambda_k w^k + \dots + \lambda_1 w,
		\end{align*}
		where $w, \lambda_j \in \mathbb{R}_{\geq 0} := \{x \in \mathbb{R} : x \geq 0\}$. It is clear that $P_k(w)$ defines a mapping $F:=P_k: [0, \infty) \to [0, \infty)$.
\end{rem}
\begin{cor}
		If $f\in \mathcal{A}_{\beta}$ is of the form \eqref{Eq-2.1}, then for $m,k\in\mathbb{N}$, $p>0$, the inequality
		\begin{align*}
			\mathcal{N}^k_f(z, r):=|\omega_m\left(z\right)|^p+\sum_{n=2}^{\infty}|a_n||\omega_n(z)|+P_k\left(\frac{S_r}{\pi}\right)\leq d(0, \partial f(\mathbb{D}))
		\end{align*}
		holds for $|z|\leq R^m_{\beta,k}$, where $R^m_{\beta,k}$ is the smallest positive root in $(0, 1)$ of 
		\begin{align*}
			r^{pm}+\tilde{f}(r)-r+P_k\left(r^2+\sum_{n=2}^{\infty}\frac{4nr^{2n}}{(n-\beta(n-1))^2}\right)+\tilde{f}(-1)=0.
		\end{align*}
		The radius $R^m_{\beta,k}$ is sharp.
\end{cor}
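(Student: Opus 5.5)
The corollary is the special case $F=P_k$ of Theorem \ref{Th-2.1}, so the plan is to check that $P_k$ meets the hypotheses of that theorem and then quote it.

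\emph{Step 1 (hypotheses).} Since every $\lambda_j\ge 0$, the polynomial $P_k(w)=\lambda_k w^k+\dots+\lambda_1 w$ maps $[0,\infty)$ into $[0,\infty)$. It has no constant term, so $P_k(0)=0$. For $0\le w_1\le w_2$ we have $w_1^j\le w_2^j$ for each $j$, and multiplying by $\lambda_j\ge0$ and summing gives $P_k(w_1)\le P_k(w_2)$. Thus $P_k$ is monotone increasing. It is strictly increasing as soon as some $\lambda_j>0$; the degenerate case $P_k\equiv 0$ is also covered, since the theorem only uses $F\ge 0$ and monotonicity.

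\emph{Step 2 (the inequality).} With $F=P_k$, Theorem \ref{Th-2.1} gives $\mathcal{N}^k_f(z,r)=\mathcal{M}_f(z,r)\le d(0,\partial f(\mathbb{D}))$ for $|z|\le R^m_{\beta,p}$. Here $R^m_{\beta,p}$ is defined by exactly the displayed equation with $P_k$ in place of $F$, so $R^m_{\beta,k}=R^m_{\beta,p}$.

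For the reader's orientation, the theorem's proof runs as follows.
\begin{itemize}
\item Lemma B(ii) gives $d(0,\partial f(\mathbb{D}))\ge -\tilde f(-1)$.
\item Schwarz's lemma gives $|\omega_m(z)|\le r^m$ and $|\omega_n(z)|\le r$.
\item Lemma A bounds the coefficient series by $\tilde f(r)-r$.
\item The area formula $S_r/\pi=\sum n|a_n|^2r^{2n}$, combined with Lemma A and the monotonicity of $F$, bounds $F(S_r/\pi)$ by $F$ evaluated at the stated series.
\end{itemize}
The left-hand side of the root equation is increasing in $r$. It is negative at $r=0$, because $\tilde f(-1)<0$, and tends to $+\infty$ as $r\to1^-$, because $\tilde f(r)\to\infty$. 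Hence the smallest root exists and the inequality holds below it.

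\emph{Step 3 (sharpness).} Sharpness is inherited from the theorem, with extremal data $f=\tilde f$, $\omega_m(z)=z^m$ and $\omega_n(z)=z$ at $z=r$. For these choices every inequality above becomes an equality, and $d(0,\partial\tilde f(\mathbb{D}))=-\tilde f(-1)$. So for $r>R^m_{\beta,k}$ the left side exceeds the distance.

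The only point needing any care is that the theorem's sharpness argument uses strict growth of the root function past the root. This already holds through the $r^{pm}$ and $\tilde f(r)-r$ terms, independent of $F$. So no real obstacle arises, and the corollary follows directly.
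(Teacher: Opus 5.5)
Your reduction is the one the paper intends. You check that $P_k$ is a nonnegative, nondecreasing map of $[0,\infty)$ with $P_k(0)=0$, then apply Theorem~\ref{Th-2.1} with $F=P_k$. You are also right that the root function grows strictly because of $r^{pm}+\tilde f(r)-r$, independently of $F$.

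However, you have misread the hypothesis on the multiple Schwarz functions, and this breaks both your sketch and your sharpness step. You use $|\omega_n(z)|\le r$ and take $\omega_n(z)=z$ as the extremal choice. With only $|\omega_n(z)|\le r$, Lemma A gives
\[
\sum_{n=2}^{\infty}|a_n||\omega_n(z)|\le r\sum_{n=2}^{\infty}\frac{2}{n-\beta(n-1)}=+\infty ,
\]
because $2/(n-\beta(n-1))\sim 2/((1-\beta)n)$ for $\beta<1$ (and the terms equal $2$ when $\beta=1$). So this bound does not produce $\tilde f(r)-r$. Worse, for $f=\tilde f$ and $\omega_n(z)=z$, the left-hand side of the corollary is $+\infty$ at every $z\neq 0$. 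If such $\omega_n$ were admissible the corollary would be false, and your Step~3 claim that ``every inequality above becomes an equality'' fails.

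The fix is to take $\omega_n\in\mathcal{B}_n$, i.e.\ with a zero of order $n$ at the origin; this is what ``multiple Schwarz functions'' means in the paper. Then Schwarz's lemma gives $|\omega_n(z)|\le |z|^n\le r^n$, and Lemma A yields exactly $\tilde f(r)-r$. The extremal data are $f=\tilde f$, $\omega_m(z)=z^m$ and $\omega_n(z)=z^n$, evaluated at $z=r$, as in the paper's proof of Theorem~\ref{Th-2.1}.

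A smaller point, shared with the paper's statement: $\tilde f(-1)<0$ holds only for $\beta<1$. For $\beta=1$ one has $\tilde f(z)=z(1+z)/(1-z)$, so $\tilde f(-1)=0$ and the root equation has no zero in $(0,1)$. The corollary should therefore be read for $0\le\beta<1$.
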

\begin{rem}
	{ It is important to highlight the relationship between Theorem \ref{Th-2.1} and existing literature, specifically the work of Giri and Kumar \cite{Giri-Kumar-RMJ-2025}. In their investigation of the Bohr radius for the class $\mathcal{A}_{\beta}$, Giri and Kumar established a sharp radius $r^*$ (the smallest positive root of $r^{m}+\tilde{f}(r)-r+\tilde{f}(-1)=0$) for the inequality 
		\begin{align*}
			|\omega(z^{m})|+\sum_{n=2}^{\infty}|a_{n}||z|^{n}\le d(0,\partial f(\mathbb{D})).
		\end{align*}
		Our results in Theorem \ref{Th-2.1} represent a substantial refinement and generalization of the Bohr inequality in \cite{Giri-Kumar-2022} through the following advancements.
		\begin{enumerate}
			\item[(a)]  While the study in \cite{Giri-Kumar-RMJ-2025} is restricted to the standard majorant series, our work establishes that the Bohr-type inequality is preserved under the addition of a functional $F(S_r/\pi)$ representing the normalized area of the image disk. This extension signifies a more general and refined version of the Bohr phenomenon for the class $\mathcal{A}_{\beta}$.\vspace{2mm}
			
			\item[(b)] In contrast to the results of Giri and Kumar \cite{Giri-Kumar-2022}, which rely on a single Schwarz function $\omega(z^m)$, our results establish a more general version of the inequality. By employing a sequence of multiple Schwarz functions $\{\omega_n(z)\}_{n \ge 2}$ for the coefficient terms, we refine the structural understanding of the majorant series for $\mathcal{A}_{\beta}$.\vspace{2mm}
			
			\item[(c)] The inclusion of the power parameter $p > 0$ extends the applicability of our results to a wider class of Bohr-type functionals. Specifically, Theorem 2.1 reduces to the result reported by Giri and Kumar \cite{Giri-Kumar-RMJ-2025} under the constraints $p=1$, $\omega_{n}(z)=z^{n}$, and $\lambda_j = 0$ for all $j=1, \dots, k$.
	\end{enumerate}}
\end{rem}
\begin{proof}[\bf Proof of Theorem \ref{Th-2.1}]
	Let $f\in \mathcal{A}_{\beta}$, then by Theorem B, the Euclidean distance between $f(0)=0$ and the boundary of $f(\mathbb{D})$ satisfies 
	\begin{align}\label{Eq-2.5}
		d(0, \partial f(\mathbb{D}))\geq \liminf_{|z|=r\to 1^{-1}}|f(z)-f(0)|=-\tilde{f}(-1).
	\end{align}
	Moreover, in view of Theorem A, we see that 
	\begin{align}\label{Eq-2.6}
		\frac{S_r}{\pi}=\sum_{n=1}^{\infty}n|a_n|^2r^{2n}\leq r^2+\sum_{n=2}^{\infty}\left(\frac{4n}{(n-\beta(n-1))^2}\right)r^{2n}.
	\end{align}
	Since $F$ is a monotone increasing function, it follows from \eqref{Eq-2.6} that 
	\begin{align*}
		F\left(\frac{S_r}{\pi}\right)\leq F\left(r^2+\sum_{n=2}^{\infty}\left(\frac{4n}{(n-\beta(n-1))^2}\right)r^{2n}\right).
	\end{align*}
	Let $|z|\leq r$. Then using Theorems A and B, \eqref{Eq-2.5} and \eqref{Eq-2.6} that
	\begin{align*}
	\mathcal{M}_f(z, r)&=|\omega_m\left(z\right)|^p+\sum_{n=2}^{\infty}|a_n||\omega_n(z)|+F\left(\frac{S_r}{\pi}\right)\\&\leq r^{pm}+\sum_{n=2}^{\infty}\left(\frac{2}{n-\beta(n-1)}\right)r^n+F\left(r^2+\sum_{n=2}^{\infty}\left(\frac{4n}{(n-\beta(n-1))^2}\right)r^{2n}\right)\\&=r^{pm}+\tilde{f}(r)-r+F\left(r^2+\sum_{n=2}^{\infty}\left(\frac{4n}{(n-\beta(n-1))^2}\right)r^{2n}\right)\\&\leq -\tilde{f}(-1)\\&\leq d(0, \partial f(\mathbb{D}))
	\end{align*}
	which is true in $|z|\leq r\leq R_{\beta, m}$, where $R_{\beta, m}$ is the unique root in $(0, 1)$ of the equation $H_{\beta, m}(r)=0$,
	\begin{align*}
		H_{\beta, m}(r):=r^{pm}+\tilde{f}(r)-r+F\left(r^2+\sum_{n=2}^{\infty}\frac{4nr^{2n}}{(n-\beta(n-1))^2}\right)+\tilde{f}(-1).
	\end{align*}
	Note that $H_{\beta, m}(r)$ is a real-valued continuous function on $(0, 1)$ satisfying $H_{\beta, m}(0)=\tilde{f}(-1)<0$ and 
	\begin{align*}
		H_{\beta, m}(1)=\tilde{f}(1)+F\left(r^2+\sum_{n=2}^{\infty}\frac{4n}{(n-\beta(n-1))^2}\right)+\tilde{f}(-1)>0.
	\end{align*}
	Therefore, by Intermediate Value Theorem (IVT) there exists a root, say $R_{\beta, m}\in (0, 1)$, of equation $H_{\beta, m}(r)=0$. Moreover, because of 
	\begin{align*}
		H'_{\beta, m}(r)&=pmr^{pm-1}+\tilde{f}'(r)-1\\&\quad+F'\left(r^2+\sum_{n=2}^{\infty}\frac{4nr^{2n}}{(n-\beta(n-1))^2}\right)\left(2r+\sum_{n=2}^{\infty}\frac{8n^2r^{2n-1}}{(n-\beta(n-1))^2}\right)>0
	\end{align*}
	for $r\in (0, 1)$ and $\beta\in [0, 1]$, we conclude that the root $R_{\beta, m}\in (0, 1)$ is unique. Thus the desired inequality is established.\vspace{1.2mm}
	
	To show that the radius $R_{\beta, m}$ is best possible, we consider $\omega_m(z)=z^m$ and $\omega_n(z)=z^n$ and $f=\tilde{f}$ which is given by \eqref{Eq-2.4}. Then for $r>R_{\beta, m}$, we have 
	\begin{align*}
		H_{\beta, m}(r)>H_{\beta, m}\left(R_{\beta, m}\right)=0,
	\end{align*} hence, a simple computation shows that
	\begin{align*}
		\mathcal{M}_{\tilde{f}}(z, r)&=|\omega_m\left(z\right)|^p+\sum_{n=2}^{\infty}|a_n||\omega_n(z)|+F\left(\frac{S_r}{\pi}\right)\\&=r^{pm}+\sum_{n=2}^{\infty}\left(\frac{2}{n-\beta(n-1)}\right)r^n+F\left(r^2+\sum_{n=2}^{\infty}\left(\frac{4n}{(n-\beta(n-1))^2}\right)r^{2n}\right)\\&=r^{pm}+\tilde{f}(r)-r+F\left(r^2+\sum_{n=2}^{\infty}\left(\frac{4n}{(n-\beta(n-1))^2}\right)r^{2n}\right)\\&=H_{\beta, m}(r)-\tilde{f}(-1)\\&>H_{\beta, m}\left(R_{\beta, m}\right)-\tilde{f}(-1)\\&=-\tilde{f}(-1)\\&=d(0, \partial \tilde{f}(\mathbb{D})).
	\end{align*}
	This shows that the radius $R_{\beta, m}$ is best possible.
\end{proof}
While classical Bohr-type inequalities face the sharp limitations discussed in \cite{Ism-Kay-Pon-JMAA-2020}, a more general formulation of the Bohr–Rogosinski inequality is established recently in \cite{Giri-Kumar-RMJ-2025} for the class $\mathcal{A}_{\beta}$.
\begin{thmB}\cite[Theorem 5.4]{Giri-Kumar-RMJ-2025}
	If $f\in\mathcal{A}_{\beta}$, then 
	\begin{align*}
		|f(z^m)|+\sum_{n=N}^{\infty}|a_n||z|^n\leq d(f(0), \partial f(\mathbb{D})),\; m, N\in\mathbb{N}
	\end{align*}
	holds for $|z|=r\leq r_N$, where $r_N$ is the root in $(0, 1)$ of the equation 
	\begin{align*}
		\tilde{f}\left(r^m\right)+\tilde{f}(r)-\hat{f}(r)+\tilde{f}(-1)=0
	\end{align*}
	with 
	\begin{align}\label{Eq-2.7}
		\hat{f}(r)=\begin{cases}
			0, \;\;\;\;\;\;\;\;\;\;\;\;\;\;\;\;\;\;\;\;\;\;\;\;\;\;\;\;\;\;\;\;\;\;\;N=1,\vspace{2mm}\\
			r, \;\;\;\;\;\;\;\;\;\;\;\;\;\;\;\;\;\;\;\;\;\;\;\;\;\;\;\;\;\;\;\;\;\;\;N=2, \vspace{2mm}\\
			r+\displaystyle\sum_{n=2}^{N-1}\frac{2}{n-\beta(n-1)}r^n, N\geq 3.
		\end{cases}
	\end{align}
	The radius $r_N$ is sharp.
\end{thmB}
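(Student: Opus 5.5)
The proof will follow the same pattern as the proof of Theorem \ref{Th-2.1}. Fix $f\in\mathcal{A}_{\beta}$ of the form \eqref{Eq-2.1}. First, Lemma B(ii) gives the lower bound for the distance:
\begin{align*}
d(f(0),\partial f(\mathbb{D}))=d(0,\partial f(\mathbb{D}))\geq \liminf_{|z|\to 1^-}|f(z)|\geq -\tilde{f}(-1).
\end{align*}
Next, Lemma B(ii) applied at the point $z^m$ with $|z^m|= r^m$ gives $|f(z^m)|\leq \tilde{f}(r^m)$. Lemma A then bounds the tail of the series:
\begin{align*}
\sum_{n=N}^{\infty}|a_n|r^n\leq \sum_{n=N}^{\infty}\frac{2}{n-\beta(n-1)}r^n=\tilde{f}(r)-\hat{f}(r).
\end{align*}
This identity follows from the expansion \eqref{Eq-2.4}, since $\hat{f}$ in \eqref{Eq-2.7} is exactly the partial sum of $\tilde{f}$ up to degree $N-1$. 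For $N=1$ we have $\hat f=0$ and the sum includes the term $|a_1|r=r$. Combining these estimates, the left-hand side is at most $\tilde{f}(r^m)+\tilde{f}(r)-\hat{f}(r)$. This is $\leq -\tilde{f}(-1)$ precisely when $G(r):=\tilde{f}(r^m)+\tilde{f}(r)-\hat{f}(r)+\tilde{f}(-1)\leq 0$.

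Second, I will show that $G$ has a unique root $r_N$ in $(0,1)$ and that $G\le 0$ on $[0,r_N]$. Continuity is clear, and $G(0)=\tilde{f}(-1)<0$ because $-\tilde f(-1)>0$: by Lemma B(i), ${\rm Re}(f(z)/z)$ is bounded below by a positive quantity. For the other endpoint, $G(r)\to\tilde{f}(1)+\tilde f(1)-\hat f(1)+\tilde{f}(-1)$ as $r\to1^-$, which may be $+\infty$ when $\beta$ is near $0$, since the coefficients behave like $2/((1-\beta)n)$. Either way the limit is positive, because $\tilde f(1)+\tilde f(-1)=2\sum_{k\ge1} a_{2k+1}\ge 0$ (with $a_1=1$ counted) and the remaining part $\tilde{f}(1)-\hat{f}(1)$ is a positive tail. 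So the intermediate value theorem applies. Uniqueness follows from monotonicity: $G'(r)=mr^{m-1}\tilde{f}'(r^m)+\sum_{n\ge N}\frac{2n}{n-\beta(n-1)}r^{n-1}>0$, since all coefficients of $\tilde f$ are positive.

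Third, sharpness. Take $f=\tilde{f}$, which belongs to $\mathcal{A}_\beta$ and has nonnegative coefficients, and evaluate at the real point $z=r$. Then $|\tilde{f}(r^m)|=\tilde f(r^m)$ and $\sum_{n\ge N}a_nr^n=\tilde{f}(r)-\hat{f}(r)$, so the left-hand side equals $G(r)-\tilde{f}(-1)$. It remains to check that $d(0,\partial\tilde{f}(\mathbb{D}))=-\tilde{f}(-1)$. Lemma B gives $\ge$. For $\le$: $\tilde{f}$ is real on $(-1,0)$ and $\tilde f(t)\to\tilde f(-1)$ as $t\to-1^+$, with $-\tilde f(-1)$ equal to the minimum modulus bound from Lemma B(ii). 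Hence $\tilde f(-1)$ lies on $\partial\tilde f(\mathbb{D})$, or at least boundary points of $\tilde f(\mathbb D)$ come arbitrarily close to it, giving $\le$. Then for $r>r_N$ we get $G(r)>0$, so the inequality fails.

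I expect the main obstacle to be making the boundary distance for $\tilde{f}$ rigorous, that is, showing the point $\tilde f(-1)$ is genuinely attained as a boundary point rather than an interior point of the image. I would try to handle this with the sharpness part of Lemma B(ii) together with the convergence of $\tilde{f}(-1)$, which is an alternating series with decreasing terms.
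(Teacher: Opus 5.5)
Your estimation part is the same argument the paper uses for Theorem~\ref{Th-2.2}, specialised to $F\equiv 0$, $p=1$, $\omega_m(z)=z^m$, $\omega_n(z)=z^n$. That is: Lemma~B(ii) gives $|f(z^m)|\le\tilde f(r^m)$ and $d(0,\partial f(\mathbb{D}))\ge-\tilde f(-1)$, Lemma~A bounds the tail, the intermediate value theorem locates $r_N$, and $\tilde f$ at $z=r$ is used for sharpness. Your monotonicity argument for uniqueness of the root is fine. There are two harmless slips. First, $\tilde f(1)+\tilde f(-1)$ is twice the sum of the \emph{even}-indexed coefficients, since the odd ones cancel. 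Second, the coefficients of $\tilde f$ are $2/(1+(1-\beta)(n-1))$, so $\tilde f(1)$ diverges for every $\beta<1$, not only for $\beta$ near $0$. Hence $G(r)\to+\infty$ as $r\to1^-$ with no case distinction.

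The genuine gap is the step you flagged, and your proposed fix cannot close it. Knowing $\tilde f(t)\to\tilde f(-1)$ as $t\to-1^+$ does not put $\tilde f(-1)$ on $\partial\tilde f(\mathbb{D})$. For a non-univalent map, a radial limit can be an interior point of the image. The sharpness in Lemma~B(ii) only says that $-\tilde f(-r)$ is the minimum modulus on $|z|=r$. That feeds the lower bound $d\ge-\tilde f(-1)$, not the reverse inequality.

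The step does work when $\tilde f$ is univalent. Then a limit of $\tilde f(t_n)$ with $t_n\to-1$ cannot lie in $\tilde f(\mathbb{D})$, by continuity of $\tilde f^{-1}$. This covers $\beta=0$, where $\tilde f'(z)=(1+z)/(1-z)$ has positive real part (Noshiro--Warschawski).

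For $\beta$ near $1$, however, the needed equality $d(0,\partial\tilde f(\mathbb{D}))=-\tilde f(-1)$ is false. One has $\tilde f(z)/z=\frac{1}{1-\beta}\int_0^1 s^{\beta/(1-\beta)}\frac{1+sz}{1-sz}\,ds$. Hence $-\tilde f(-1)=\int_0^1\frac{1-t^{1-\beta}}{1+t^{1-\beta}}\,dt\le 1-\beta$; the same formula is what gives $-\tilde f(-1)>0$, which your claim $G(0)<0$ needs. On the other hand, $|\tilde f(z)-z(1+z)/(1-z)|\le 2(1-\beta)$ on $|z|\le 1/2$, and $|z(1+z)/(1-z)|\ge 1/6$ on $|z|=1/2$. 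So by Rouch\'e, $\tilde f(\mathbb{D})\supset\{|w|<1/12\}$ once $1-\beta<1/24$. Then $d(0,\partial\tilde f(\mathbb{D}))\ge 1/12>-\tilde f(-1)$, so $\tilde f$ satisfies the inequality slightly beyond $r_N$ and does not witness sharpness.

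The paper does not supply this step either. Theorem~B is only cited, and in the proofs of Theorems~\ref{Th-2.1} and~\ref{Th-2.2} the equality is simply asserted. What is really missing is a univalence or covering argument for $\tilde f$, and that is only available for part of the range of $\beta$.
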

It is natural to pose the following question regarding the potential for further refinement of the Bohr-type inequalities.
\begin{prob}\label{Q-2.2}
Can an improved version of Theorem B be established that incorporates multiple Schwarz functions alongside the functional value of the ratio $S_r/\pi$?
\end{prob}
The following result provides an affirmative answer to Problem \ref{Q-2.2}. Furthermore, we demonstrate that this formulation yields several sharp results in various specific cases.
\begin{thm}\label{Th-2.2}
	If $f\in \mathcal{A}_{\beta}$ is of the form \eqref{Eq-2.1} and $F : [0, \infty)\to [0, \infty)$ is an monotone increasing function with $F(0)=0$, then for $m, N\in\mathbb{N}$, $p>0$
	\begin{align*}
		\mathcal{N}^f_{p, \beta,N}(z, r):=|f(\omega_m(z))|^p+\sum_{n=N}^{\infty}|a_n||\omega_n(z)|+F\left(\frac{S_r}{\pi}\right)\leq d(f(0), \partial f(\mathbb{D}))
	\end{align*}
	holds for $|z|=r\leq R^{\beta}_{N,m,p}$, where $R^{\beta}_{N,m,p}$ is the root in $(0, 1)$ of the equation 
	\begin{align*}
		\left(\tilde{f}\left(r^m\right)\right)^p+\tilde{f}(r)-\hat{f}(r)+\tilde{f}(-1)+F\left(r^2+\sum_{n=2}^{\infty}\frac{4nr^{2n}}{(n-\beta(n-1))^2}\right)=0
	\end{align*}
	with $\hat{f}$ is given by \eqref{Eq-2.7}.	The radius $R^{\beta}_{N,m,p}$ is sharp.
\end{thm}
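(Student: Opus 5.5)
We would follow the same template as the proof of Theorem \ref{Th-2.1}, combining the three sharp ingredients already available: the coefficient bound of Lemma A, the growth estimate of Lemma B, and the area bound \eqref{Eq-2.6}.

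\textbf{Step 1: lower bound for the distance.} Since $f(0)=0$, Lemma B(ii) gives, exactly as in \eqref{Eq-2.5},
\begin{align*}
d(f(0),\partial f(\mathbb{D}))\geq -\tilde{f}(-1).
\end{align*}

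\textbf{Step 2: bound each term of $\mathcal{N}^f_{p,\beta,N}(z,r)$ for $|z|=r$.}
\begin{itemize}
\item[(a)] Since $\omega_m\in\mathcal{B}_m$, Schwarz's lemma applied to $\omega_m(z)/z^m$ gives $|\omega_m(z)|\leq r^m$. Lemma B(ii) bounds $|f(w)|\leq \tilde{f}(|w|)$. Because $\tilde{f}$ has positive Taylor coefficients it is increasing on $[0,1)$, so $|f(\omega_m(z))|^p\leq (\tilde{f}(r^m))^p$, using $p>0$.
\item[(b)] Since $\omega_n\in\mathcal{B}_n$, we have $|\omega_n(z)|\leq r^n$. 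With Lemma A this gives
\begin{align*}
\sum_{n=N}^{\infty}|a_n||\omega_n(z)|\leq \sum_{n=N}^{\infty}\frac{2r^n}{n-\beta(n-1)}=\tilde{f}(r)-\hat{f}(r),
\end{align*}
by the definition \eqref{Eq-2.7}. This identity is checked case by case. For $N=1$ it requires $|\omega_1(z)|\leq r$ and $|a_1|=1$. For $N=2$ and $N\geq3$ one subtracts the first $N-1$ terms of \eqref{Eq-2.4}.
\item[(c)] The area term is handled by \eqref{Eq-2.6} together with the monotonicity of $F$.
\end{itemize}

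\textbf{Step 3: reduce to a scalar inequality.} Summing the bounds from Step 2 gives $\mathcal{N}^f_{p,\beta,N}(z,r)\leq G(r)-\tilde{f}(-1)$, where
\begin{align*}
G(r):=\left(\tilde{f}(r^m)\right)^p+\tilde{f}(r)-\hat{f}(r)+\tilde{f}(-1)+F\left(r^2+\sum_{n=2}^{\infty}\frac{4nr^{2n}}{(n-\beta(n-1))^2}\right).
\end{align*}
The claim therefore follows once $G(r)\leq 0$ for $r\leq R^{\beta}_{N,m,p}$.

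\textbf{Step 4: existence and uniqueness of the root.}
\begin{itemize}
\item[(a)] At the left endpoint, $G(0)=\tilde{f}(-1)<0$. This uses $F(0)=0$, and for $N=1$ it uses $\tilde{f}(0)-\hat{f}(0)=0$.
\item[(b)] As $r\to1^-$, the term $\tilde{f}(r)\to\infty$ at least for $\beta<1$, since $\sum 2/(n-\beta(n-1))$ diverges. For $N\geq 3$ the subtracted polynomial $\hat{f}$ stays bounded, and $\tilde f(-1)$ is finite. Hence $G$ becomes positive, and the intermediate value theorem gives a root.
\item[(c)] For uniqueness we want $G$ strictly increasing. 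Every piece is a power series with nonnegative coefficients, or $F$ composed with one, except the summand $\tilde{f}(r)-\hat{f}(r)$. That summand equals $\sum_{n\geq N}\frac{2r^n}{n-\beta(n-1)}$, which is increasing. So $G$ is strictly increasing with no differentiability of $F$ needed, and the root $R^{\beta}_{N,m,p}$ is unique.
\end{itemize}

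\textbf{Step 5: sharpness.} We take $f=\tilde{f}$, $\omega_m(z)=z^m$, $\omega_n(z)=z^n$ and $z=r>0$. Then:
\begin{itemize}
\item[(a)] $|f(\omega_m(r))|^p=(\tilde{f}(r^m))^p$, since $\tilde f(r^m)>0$;
\item[(b)] the coefficient sum equals $\tilde{f}(r)-\hat{f}(r)$;
\item[(c)] $S_r/\pi$ equals the majorant in \eqref{Eq-2.6}, because Lemma A holds with equality for $\tilde f$.
\end{itemize}
Hence $\mathcal{N}^{\tilde f}_{p,\beta,N}(r,r)=G(r)-\tilde{f}(-1)>-\tilde{f}(-1)$ for $r>R^{\beta}_{N,m,p}$. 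To finish we need $d(0,\partial\tilde{f}(\mathbb{D}))=-\tilde{f}(-1)$. We would argue that $\tilde f$ has real coefficients and that $\tilde f(r)\to\tilde f(-1)$ as $r\to-1^+$ along the real axis, so the boundary point $\tilde f(-1)$ is at distance $-\tilde f(-1)$ from $0$. This gives the reverse inequality; the lower bound comes from Step 1.

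The main obstacle is this last point, together with the edge cases $N=1,2$ and $\beta=1$ (where $\tilde f(z)=z(-1+2\cdot\frac{-\log(1-z)}{z})$ still blows up at $1$, so the argument survives). I would verify directly from \eqref{Eq-2.4} that $\tilde f(-1)$ is a finite negative number attained as a boundary limit.
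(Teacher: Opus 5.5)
Your proof of the inequality is the paper's argument: Lemma A, Lemma B, \eqref{Eq-2.6}, monotonicity of $F$, and the same extremal $\tilde f$ with $\omega_n(z)=z^n$. In a few places it is tidier than the paper:
\begin{itemize}
\item you get $G(0)=\tilde f(-1)$, where the paper writes $G(1)$;
\item you replace the meaningless value ``$\tilde f(1)$'' by the limit $\tilde f(r)\to\infty$;
\item you prove uniqueness of the root by monotonicity, without differentiating $F$.
\end{itemize}

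The genuine gap is the step you flagged, and your proposed fix does not close it. A radial boundary limit of a map is a point of the boundary of its image only under extra hypotheses such as univalence, and $\tilde f$ is not univalent for any $\beta\in(0,1)$. From $\beta\tilde f(z)/z+(1-\beta)\tilde f'(z)=(1+z)/(1-z)$ you get, for $x\in(-1,0)$,
\begin{align*}
\tilde f'(x)=\frac{1}{1-\beta}\left(\frac{1+x}{1-x}-\beta\,\frac{\tilde f(x)}{x}\right)\longrightarrow \frac{\beta\,\tilde f(-1)}{1-\beta}<0 \quad (x\to -1^+),
\end{align*}
while $\tilde f'(0)=1$. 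So $\tilde f$ drops below $\tilde f(-1)$ just to the right of $-1$ and then rises to $\tilde f(0)=0$. Hence $\tilde f(x_1)=\tilde f(-1)$ for some $x_1\in(-1,0)$. For example, at $\beta=1/2$ one has $\tilde f(-1)=4\log 2-3\approx-0.227$ but $\tilde f(-0.9)\approx-0.247$.

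So $\tilde f(-1)$ lies in the open set $\tilde f(\mathbb{D})$ and is not a point of $\partial\tilde f(\mathbb{D})$. It is in fact worse than that, for three reasons:
\begin{itemize}
\item $|\tilde f(z)|\ge |z|\,{\rm Re}(\tilde f(z)/z)$;
\item on $|z|=r\le 1$, ${\rm Re}(\tilde f(z)/z)$ is minimized only at $z=-r$;
\item ${\rm Re}(\tilde f(z)/z)\to\infty$ as $z\to 1$.
\end{itemize}
Together these force any boundary point of modulus $-\tilde f(-1)$ to equal $\tilde f(-1)$ itself. Hence $d(0,\partial\tilde f(\mathbb{D}))>-\tilde f(-1)$ strictly. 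Consequently $\mathcal{N}^{\tilde f}(r,r)>-\tilde f(-1)$ does not give $\mathcal{N}^{\tilde f}>d(0,\partial\tilde f(\mathbb{D}))$ for $r$ slightly above $R^{\beta}_{N,m,p}$.

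The paper's proof simply asserts $-\tilde f(-1)=d(0,\partial\tilde f(\mathbb{D}))$. So you are not missing an idea the paper supplies, but the sharpness claim is justified as written only for $\beta=0$. There $\tilde f'(z)=(1+z)/(1-z)$ has positive real part, so $\tilde f$ is univalent and your boundary-limit argument is valid. For $\beta\in(0,1)$, your Step 5 works if $d(0,\partial f(\mathbb{D}))$ is read as $\liminf_{|z|\to1}|f(z)|$. Step 1 only uses $d\ge\liminf$, and for $\tilde f$ that $\liminf$ equals $-\tilde f(-1)$ by Lemma B. Under the literal Euclidean distance, sharpness needs a different extremal argument.

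Separately, your remark about $\beta=1$ is wrong. The formula you wrote is the $\beta=0$ extremal. At $\beta=1$ one has $\tilde f(z)=z(1+z)/(1-z)$, so $\tilde f(-1)=0$. Then $G(0)=0$ and $G>0$ on $(0,1)$, so there is no root in $(0,1)$. The statement implicitly requires $\beta<1$, as does \eqref{Eq-2.4}.
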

\begin{cor}	
If $f\in \mathcal{A}_{\beta}$ is of the form \eqref{Eq-2.1}, then for $m, k\in\mathbb{N}$, $p>0$
\begin{align*}
	|f(\omega_m(z))|^p+\sum_{n=N}^{\infty}|a_n||\omega_n(z)|+P_k\left(\frac{S_r}{\pi}\right)\leq d(f(0), \partial f(\mathbb{D})),\; m, N\in\mathbb{N}
\end{align*}
holds for $|z|=r\leq R^{\beta,k}_{N,m,p}$, where $R^{\beta,k}_{N,m,p}$ is the root in $(0, 1)$ of the equation 
\begin{align*}
	\left(\tilde{f}\left(r^m\right)\right)^p+\tilde{f}(r)-\hat{f}(r)+\tilde{f}(-1)+P_k\left(r^2+\sum_{n=2}^{\infty}\frac{4nr^{2n}}{(n-\beta(n-1))^2}\right)=0
\end{align*}
with $\hat{f}$ is given by \eqref{Eq-2.7}.	The radius $R^{\beta,k}_{N,m,p}$ is sharp.
\end{cor}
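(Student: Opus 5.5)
The plan mirrors the proof of Theorem \ref{Th-2.1}, with the growth estimate of Lemma B applied to the term $|f(\omega_m(z))|^p$. Since $f(0)=0$, the target distance is $d(0,\partial f(\mathbb{D}))$. By Lemma B(ii),
\begin{align*}
d(0,\partial f(\mathbb{D}))\ge\liminf_{|z|\to1^-}|f(z)|\ge-\tilde f(-1),
\end{align*}
exactly as in \eqref{Eq-2.5}. For $|z|=r$ and $\omega_m\in\mathcal{B}_m$, Schwarz's lemma gives $|\omega_m(z)|\le r^m$. Because $\tilde f$ has positive coefficients, $\tilde f(t)$ is increasing on $[0,1)$, so Lemma B(ii) yields
\begin{align*}
|f(\omega_m(z))|^p\le\bigl(\tilde f(|\omega_m(z)|)\bigr)^p\le\bigl(\tilde f(r^m)\bigr)^p .
\end{align*}
For the tail, I would read the Schwarz functions $\omega_n$ as belonging to $\mathcal{B}_n$, which is the reading under which sharpness with $\omega_n(z)=z^n$ makes sense. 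Then $|\omega_n(z)|\le r^n$, and Lemma A gives
\begin{align*}
\sum_{n=N}^\infty|a_n||\omega_n(z)|\le\sum_{n=N}^\infty\frac{2r^n}{n-\beta(n-1)}=\tilde f(r)-\hat f(r),
\end{align*}
with $\hat f$ as in \eqref{Eq-2.7}. This is a direct check in the three cases $N=1$ (where the coefficient $1$ for $n=1$ matches $a_1=1$), $N=2$ and $N\ge3$. Finally, \eqref{Eq-2.6} together with the monotonicity of $F$ bounds $F(S_r/\pi)$ by $F\bigl(r^2+\sum_{n\ge2}4nr^{2n}/(n-\beta(n-1))^2\bigr)$.

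Adding the three bounds gives $\mathcal{N}^f_{p,\beta,N}(z,r)\le G(r)-\tilde f(-1)$, where
\begin{align*}
G(r):=\bigl(\tilde f(r^m)\bigr)^p+\tilde f(r)-\hat f(r)+\tilde f(-1)+F\Bigl(r^2+\sum_{n=2}^{\infty}\frac{4nr^{2n}}{(n-\beta(n-1))^2}\Bigr).
\end{align*}
The inequality therefore holds whenever $G(r)\le0$. For the root, $G(0)=\tilde f(-1)<0$, since $\tilde f(-1)/(-1)$ is a positive lower bound by Lemma B(i) and the integral representation. As $r\to1^-$, the expression $\tilde f(1^-)-\hat f(1)+\tilde f(-1)$ is positive: its tail is a sum of nonnegative terms that diverges like the harmonic series when $\beta<1$. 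The remaining terms are nonnegative, so the intermediate value theorem supplies a root. Every term is nondecreasing in $r$: $\tilde f(r)-\hat f(r)$ is a tail of a positive series, $\tilde f(r^m)$ is increasing, and $F$ is increasing with an increasing argument. Hence $G$ is strictly increasing, the root $R^\beta_{N,m,p}$ is unique, and $G\le0$ exactly on $[0,R^\beta_{N,m,p}]$.

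For sharpness, take $f=\tilde f$, $\omega_m(z)=z^m$, $\omega_n(z)=z^n$ and $z=r>0$. Then every bound above becomes an equality: $|\tilde f(r^m)|=\tilde f(r^m)$, and the coefficients and the area sum are exact. Moreover $d(0,\partial\tilde f(\mathbb{D}))=-\tilde f(-1)$, because the extremal boundary point is approached along the negative axis. Consequently, for $r>R^\beta_{N,m,p}$,
\begin{align*}
\mathcal{N}^{\tilde f}_{p,\beta,N}(r,r)=G(r)-\tilde f(-1)>-\tilde f(-1)=d(0,\partial\tilde f(\mathbb{D})).
\end{align*}

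The main obstacle I expect is justifying the equality $d(0,\partial\tilde f(\mathbb{D}))=-\tilde f(-1)$. The upper bound on the distance requires that $\tilde f(-1)$, the limit as $r\to1^-$ along the negative radius, is a boundary point of $\tilde f(\mathbb{D})$, which needs some care since $\tilde f$ need not be univalent. I would argue that $\tilde f(-r)$ is real and decreases to $\tilde f(-1)$, while by the sharp lower growth bound in Lemma B no point of $\tilde f(\mathbb{D})$ has modulus below $-\tilde f(-1)$ in the limit. A secondary point is the boundary case $\beta=1$, where $\tilde f$ is degenerate and the positivity of $G$ near $1$ must be checked separately.
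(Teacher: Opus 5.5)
Your proof of the inequality is the paper's proof of Theorem~\ref{Th-2.2} specialised to $F=P_k$ (which is increasing with $P_k(0)=0$), and that part is correct. You are even more careful than the paper about $G(r)\to+\infty$ as $r\to1^-$.

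\textbf{The gap.} The problem is the step you flagged in the sharpness argument, and your proposed repair does not work. For $0<\beta<1$ the map $r\mapsto\tilde f(-r)$ does \emph{not} decrease to $\tilde f(-1)$. Letting $z\to-1$ in $\beta\tilde f(z)/z+(1-\beta)\tilde f'(z)=(1+z)/(1-z)$ gives $\tilde f'(-1)=\frac{\beta}{1-\beta}\,\tilde f(-1)<0$. So $\tilde f(-r)$ is strictly increasing near $r=1$. Since $\tilde f(0)=0>\tilde f(-1)$, there is $r_1\in(0,1)$ with $\tilde f(-r_1)=\tilde f(-1)$.

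For example, for $\beta=1/2$ one has $-\tilde f(-r)=4-r-4r^{-1}\log(1+r)$. This is about $0.268$ at $r=0.65$ but equals $3-4\log 2\approx0.227$ at $r=1$. Hence $\tilde f(-1)$ lies in the open set $\tilde f(\mathbb{D})$ and is not a boundary point.

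\textbf{The equality you need is false for $0<\beta<1$.} Using $\tilde f(z)/z=\int_0^1\frac{1+s^{1-\beta}z}{1-s^{1-\beta}z}\,ds$, $\tilde f$ extends continuously to $\overline{\mathbb{D}}\setminus\{1\}$ with $|\tilde f(z)|\to\infty$ as $z\to1$. So every boundary point of $\tilde f(\mathbb{D})$ is $\tilde f(\zeta)$ for some $|\zeta|=1$, $\zeta\neq1$. For $\zeta\neq-1$,
\begin{align*}
|\tilde f(\zeta)|\geq{\rm Re}\,\frac{\tilde f(\zeta)}{\zeta}=\int_0^1\frac{1-s^{2(1-\beta)}}{|1-s^{1-\beta}\zeta|^2}\,ds>\int_0^1\frac{1-s^{1-\beta}}{1+s^{1-\beta}}\,ds=-\tilde f(-1).
\end{align*}
At the same time, a whole neighbourhood of $\tilde f(-1)$ lies inside $\tilde f(\mathbb{D})$. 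A compactness argument then gives $d(0,\partial\tilde f(\mathbb{D}))>-\tilde f(-1)$ strictly.

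The rotations of $\tilde f$ are the only functions in $\mathcal{A}_{\beta}$ attaining all the coefficient bounds, and they have the same distance. So your test function does not show that $R^{\beta,k}_{N,m,p}$ is best possible when $0<\beta<1$. A genuinely different argument would be needed, and the sharpness claim itself is in doubt there.

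\textbf{Relation to the paper and the endpoint cases.} The paper does not supply the missing step either: its proof of Theorem~\ref{Th-2.2} simply asserts $-\tilde f(-1)=d(0,\partial\tilde f(\mathbb{D}))$. You have inherited this gap rather than created it.

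The argument does go through for $\beta=0$. There ${\rm Re}\,\tilde f'>0$ makes $\tilde f$ univalent (Noshiro--Warschawski), so the boundary value $\tilde f(-1)$ does lie on $\partial\tilde f(\mathbb{D})$.

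Your remark on $\beta=1$ looks at the wrong end of the interval. There $\tilde f(-1)=0$, so $G(0)=0$ and $G>0$ on $(0,1)$. There is no root in $(0,1)$ at all, and the statement degenerates.
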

\begin{proof}[\bf Proof of Theorem \ref{Th-2.2}]
	Suppose $f\in\mathcal{A}_{\beta}$, then from \eqref{Eq-2.1} and \cite[Theorem 5.1]{Giri-Kumar-RMJ-2025}, we obtain 
	\begin{align*}
		\mathcal{N}^f_{p, \beta,N}&=|f(\omega_m(z))|^p+\sum_{n=N}^{\infty}|a_n||\omega_n(z)|+F\left(\frac{S_r}{\pi}\right)\\&\leq \left(\tilde{f}\left(r^m\right)\right)^p-\hat{f}(r)+\tilde{f}(r)+F\left(r^2+\sum_{n=2}^{\infty}\frac{4nr^{2n}}{(n-\beta(n-1))^2}\right)\\&\leq -\tilde{f}(-1)\\&\leq d(0, \partial f(\mathbb{D}))
	\end{align*}
	holds in $|z|=r\leq R^{\beta}_{N,m,p}$, where $R^{\beta}_{N,m,p}$ is the unique root in $(0, 1)$ of equation $G^{\beta}_{N,m,p}(r)=0$, where
	\begin{align*}
		G^{\beta}_{N,m,p}(r):=\left(\tilde{f}\left(r^m\right)\right)^p-\hat{f}(r)+\tilde{f}(r)+\tilde{f}(-1)+F\left(r^2+\sum_{n=2}^{\infty}\frac{4nr^{2n}}{(n-\beta(n-1))^2}\right).
	\end{align*}
	We see that $G^{\beta}_{N,m,p}$ is a continuous function on $(0, 1)$ satisfying $G^{\beta}_{N,m,p}(1)=\tilde{f}(-1)<0$ and 
	\begin{align*}
	 G^{\beta}_{N,m,p}(1)&=\left(\left(\tilde{f}\left(1\right)\right)^p-\hat{f}(1)\right)+\left(\tilde{f}(1)+\tilde{f}(-1)\right)\\&\quad+F\left(1+\sum_{n=2}^{\infty}\frac{4n}{(n-\beta(n-1))^2}\right)>0.
	\end{align*}
	Thus, we see that there exists a root $R^{\beta}_{N,m,p}\in (0, 1)$ such that the desired inequality 
	\begin{align*}
		\mathcal{N}^f_{p, \beta,N}(z, r)\leq d(f(0), \partial f(\mathbb{D}))
	\end{align*}
	holds.\vspace{1.2mm}
	
	Note that the sharpness of $R^{\beta}_{N,m,p}$ is confirmed by considering the function $\tilde{f}$ where $\omega_m(z)=z^m$ and $\omega_n(z)=z^n$. When $|z|=r=R^{\beta}_{N,m,p}$, we see that 
	\begin{align*}
		|f(\omega_m(z))|^p&+\sum_{n=N}^{\infty}|a_n||\omega_n(z)|+F\left(\frac{S_r}{\pi}\right)\\&=\left(\tilde{f}\left(\left(R^{\beta}_{N,m,p}\right)^m\right)\right)^p+\tilde{f}\left(R^{\beta}_{N,m,p}\right)-\hat{f}\left(R^{\beta}_{N,m,p}\right)\\&\quad+F\left(\left(R^{\beta}_{N,m,p}\right)^2+\sum_{n=2}^{\infty}\frac{4n\left(R^{\beta}_{N,m,p}\right)^{2n}}{(n-\beta(n-1))^2}\right)\\&=-\tilde{f}(-1)\\&= d(0, \partial \tilde{f}(\mathbb{D})).
	\end{align*}
	This completes the proof.
\end{proof}
{In this article, we have shown that the monotonicity of $F$ is sufficient to derive sharp refinements for the Bohr phenomenon in the class $\mathcal{A}_{\beta}$. A natural extension of this work involves imposing stronger regularity conditions on the functional itself. Specifically, we consider whether higher topological structure yields new insights into the behavior of the majorant series. Can additional assumptions on the function $F$ regarding monotonicity (e.g., monotonicity and homeomorphism) lead to interesting outcomes? Can we define $F^{-1}$ in this setting? We pose the following questions for further study.
	\begin{prob}
		Let $f \in \mathcal{A}_{\beta}$ and let $F: [0, \infty) \to [0, \infty)$ be a monotone increasing homeomorphism such that $F(0)=0$. Investigate the existence of a sharp radius $r_0$ such that the functional $F^{-1} \left( \mathcal{M}_f(z,r) \right)$ remains bounded by a geometric constant related to the semigroup generator properties of $\mathcal{A}_{\beta}$.
\end{prob}}
\section{\bf The sharp Fekete-Szeg\"o inequality for functions in the class $\mathcal{A}_{\beta}$}
The Hankel determinant $H_{q,n}(f)$ (see \cite[Def. 1.2.2] {Thomas-Tuneski-Vasudevarao-2018}) has been extensively studied, particularly in the case $q=2$. For instance, the functional $H_{2,1}(f) = a_3 - a_2^2$ is known as the classical Fekete-Szegö functional. Its generalization, $a_3 - \mu a_2^2$, where $\mu$ is a real number, gives rise to the well-known Fekete–Szegö problem, which seeks sharp upper bounds for $|a_3 - \mu a_2^2|$. In $1969$, Keogh and Merkes \cite{Keogh-Merkes-PAMS-1969} solved this problem for the class $\mathcal{S}^*$. Since then, a unified approach to the Fekete–Szegö problem has been explored for various subclasses of univalent functions by numerous authors; for further details (see \cite{Abdel-Thomas-PAMS-1992, Elin-Jaco-RM-2022, Fekete-Szego_JLMS-1933, Keogh-Merkes-PAMS-1969, Koepf-PAMS-1987, Xu-Jiang-Liu-JMAA-2023}). \vspace{2mm} 

Let $\mathcal{P}$ be the class of all analytic functions $p$ in the unit disk $\mathbb{D}$ satisfying $p(0)=1$ and $\text{Re } p(z) > 0$ for $z \in \mathbb{D}$. Thus, every $p \in \mathcal{P}$ can be represented as
\begin{align}\label{Eq-3.1}
	p(z) = 1 + \sum_{n=1}^{\infty} c_n z^n, \quad z \in \mathbb{D}.
\end{align}
Functions in the class $\mathcal{P}$ are called \textit{Carathéodory functions}. It is well-known that $|c_n| \leq 2$ for $n \geq 1$ for any function $p \in \mathcal{P}$ (see \cite{Duren-1983}). \vspace{2mm}

In this section, we derive the upper bound of the Fekete-Szegö functional $|a_3 - \mu a_2^2|$ for the class $\mathcal{A}_{\beta}$. To establish our results, the following lemma will be required.
\begin{lem}\cite{Ma-Minda-1994}\label{lem-3.1}
Let $p\in\mathcal{P}$ be given by $p(z)=1+\sum_{n=1}^{\infty}c_nz^n$. Then
\begin{align*}
	|c_2 -vc^2_1|\leq\begin{cases}
		-4v +2, \hspace{1cm} v<0,\\\;\;\;\;\;\; 2, \hfill 0\leq v\leq 1, \\\;\; 4v -2, \hfill v>1.
	\end{cases}
\end{align*}
All inequalities are sharp.
\end{lem}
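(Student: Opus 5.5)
The plan is to reduce the statement to the classical case $v=\tfrac12$ by an affine change of variables in the coefficients, and then read off the three cases in $v$. Write
\begin{align*}
c_2-vc_1^2=\Bigl(c_2-\tfrac12 c_1^2\Bigr)+\Bigl(\tfrac12-v\Bigr)c_1^2 .
\end{align*}
The two ingredients are the bound $|c_1|\le 2$ and the sharper estimate
\begin{align*}
\Bigl|c_2-\tfrac12c_1^2\Bigr|\le 2-\tfrac12|c_1|^2 .
\end{align*}

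To obtain the sharper estimate, I would pass from $p$ to the Schwarz function $\omega=(p-1)/(p+1)=b_1z+b_2z^2+\cdots$. Comparing coefficients gives $c_1=2b_1$ and $c_2=2b_2+2b_1^2$, so that $c_2-\tfrac12c_1^2=2b_2$. The classical Schwarz-function bound $|b_2|\le 1-|b_1|^2$ then yields the claim. This bound comes from applying the Schwarz lemma to $(\omega(z)/z-b_1)/(1-\bar b_1\omega(z)/z)$, or equivalently from $H^2$ norm considerations.

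Put $x=|c_1|^2\in[0,4]$. The triangle inequality gives
\begin{align*}
|c_2-vc_1^2|\le 2-\tfrac12x+\bigl|\tfrac12-v\bigr|\,x .
\end{align*}
The right-hand side is affine in $x$, so its maximum over $[0,4]$ sits at an endpoint.
\begin{itemize}
\item If $|\tfrac12-v|\le\tfrac12$, i.e. $0\le v\le1$, the slope is nonpositive, and the maximum $2$ occurs at $x=0$.
\item If $v<0$, the slope $-v$ is positive, and the value at $x=4$ is $2-2+4(\tfrac12-v)=2-4v$.
\item If $v>1$, the slope is $v-1>0$, and the value at $x=4$ is $4v-2$.
\end{itemize}
These are exactly the three stated bounds.

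For sharpness, two test functions suffice. The function $p(z)=(1+z)/(1-z)$, with $c_n=2$, gives $|2-4v|$, which equals $2-4v$ for $v<0$ and $4v-2$ for $v>1$. The function $p(z)=(1+z^2)/(1-z^2)$, with $c_1=0$ and $c_2=2$, gives $2$ for $0\le v\le1$.

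The main obstacle is the inequality $|b_2|\le1-|b_1|^2$. It is standard, but it needs the Schur-type argument sketched above rather than the crude bound $|c_2|\le2$.
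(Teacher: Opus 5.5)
Your proof is correct and is essentially the standard argument for the cited Ma--Minda lemma; the paper itself only quotes the result without proof. You write $\omega=(p-1)/(p+1)$, so that $c_2-vc_1^2=2\bigl(b_2-(2v-1)b_1^2\bigr)$, and the bound $|b_2|\le 1-|b_1|^2$ then gives the three cases, with $(1+z)/(1-z)$ and $(1+z^2)/(1-z^2)$ as extremal functions.

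The one thing to correct is your parenthetical alternative. The plain $H^2$ estimate $|b_1|^2+|b_2|^2\le 1$ gives only $|b_2|\le\sqrt{1-|b_1|^2}$. That is too weak for your triangle-inequality step: at $v=1$, $x=2$ it would allow $\sqrt2+1>2$. So you really need the Schwarz--Pick/Schur argument, and you should treat the trivial case $|b_1|=1$ separately, since there $\omega$ is a rotation and $b_2=0$.
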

By applying Lemma \ref{lem-3.1}, we establish the following result, which is shown to be sharp for all values of $\mu$ within the specified range.
\begin{thm}\label{Th-3.1}
If $f\in\mathcal{A}_{\beta}$ is given by \eqref{Eq-2.1}, then we have
\begin{align*}
	|a_3 -\mu a^2_2|\leq \begin{cases}
		\dfrac{(8-12\mu)+(8\mu-8)\beta +2\beta^2}{(3-2\beta)(2-\beta)^2}, \hspace{1cm} \mu<0,\vspace{2mm} \\\;\;\;\;\;\;\; \dfrac{2}{(3-2\beta)}, \hspace{3.8cm} 0 \leq \mu\leq\dfrac{(2-\beta)^2}{(3-2\beta)}, \vspace{2mm}\\ -\dfrac{(8-12\mu) +(8\mu-8)\beta +2\beta^2}{(3-2\beta)(2-\beta)^2}, \hspace{0.7cm} \mu>\dfrac{(2-\beta)^2}{(3-2\beta)}. 
	\end{cases}
\end{align*}
All inequalities are sharp.
\end{thm}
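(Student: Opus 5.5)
Since $f\in\mathcal{A}_{\beta}$, the function
\begin{align*}
p(z):=\beta\frac{f(z)}{z}+(1-\beta)f'(z)
\end{align*}
is analytic in $\mathbb{D}$ with $p(0)=1$ and ${\rm Re}\,p(z)>0$, so $p\in\mathcal{P}$ and has the form \eqref{Eq-3.1}. The plan is to express $a_2,a_3$ through $c_1,c_2$ and then reduce $|a_3-\mu a_2^2|$ to the functional in Lemma \ref{lem-3.1}.

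Comparing coefficients of $z^{n-1}$ gives $\beta a_n+(1-\beta)na_n=c_{n-1}$, that is, $a_n=c_{n-1}/(n-\beta(n-1))$. In particular $a_2=c_1/(2-\beta)$ and $a_3=c_2/(3-2\beta)$. Hence
\begin{align*}
a_3-\mu a_2^2=\frac{1}{3-2\beta}\left(c_2-v c_1^2\right),\qquad v:=\frac{\mu(3-2\beta)}{(2-\beta)^2}.
\end{align*}
Since $3-2\beta>0$ and $(2-\beta)^2>0$ for $\beta\in[0,1]$, the sign of $v$ is the sign of $\mu$. Moreover, $v\le 1$ exactly when $\mu\le (2-\beta)^2/(3-2\beta)$. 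These are the three ranges in the statement.

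Applying Lemma \ref{lem-3.1} case by case:
\begin{enumerate}
\item[(i)] For $0\le v\le1$ we get the bound $2/(3-2\beta)$.
\item[(ii)] For $v<0$ we get $(2-4v)/(3-2\beta)$, which simplifies to
\begin{align*}
\frac{2(2-\beta)^2-4\mu(3-2\beta)}{(3-2\beta)(2-\beta)^2}.
\end{align*}
Expanding, $2(4-4\beta+\beta^2)-12\mu+8\mu\beta=(8-12\mu)+(8\mu-8)\beta+2\beta^2$, which matches the stated numerator.
\item[(iii)] For $v>1$ the bound $(4v-2)/(3-2\beta)$ is the negative of the expression in (ii), as claimed.
\end{enumerate}

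For sharpness, take the extremal Carathéodory functions of Lemma \ref{lem-3.1}: $p(z)=(1+z)/(1-z)$ when $v<0$ or $v>1$, and $p(z)=(1+z^2)/(1-z^2)$ when $0\le v\le1$. Given any such $p=1+\sum c_nz^n$, define
\begin{align*}
f(z)=z+\sum_{n\ge2}\frac{c_{n-1}}{n-\beta(n-1)}z^n.
\end{align*}
This $f$ solves $\beta f/z+(1-\beta)f'=p$. (For $p=(1+z)/(1-z)$ it is exactly $\tilde f$ from \eqref{Eq-2.4}.) So $f\in\mathcal{A}_\beta$, and the coefficient identity above turns equality in the lemma into equality in the theorem.

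The only delicate point is the endpoint $\beta=1$. There the relation $a_n=c_{n-1}$ still holds, and $f(z)=zp(z)$ is analytic, so the construction remains valid. Otherwise the argument is routine algebra.
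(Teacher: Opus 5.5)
Your proposal is correct and follows the paper's proof: the same substitution $p=\beta f/z+(1-\beta)f'$, the same reduction to $\frac{1}{3-2\beta}|c_2-vc_1^2|$ with $v=\mu(3-2\beta)/(2-\beta)^2$, the Ma--Minda lemma applied on the three ranges of $v$, and the same extremal functions generated by $p=(1+z)/(1-z)$ and $p=(1+z^2)/(1-z^2)$. Your series construction of the extremals is slightly cleaner than the paper's, since it also covers $\beta=1$, where the hypergeometric formula for $\tilde f$ is undefined.
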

\begin{proof}[\bf Proof of Theorem \ref{Th-3.1}]
Let $f\in\mathcal{A}_{\beta}$ is defined in \eqref{Eq-2.1}. Then, we have	
\begin{align}\label{eq-3.2}
	\beta\frac{f(z)}{z}+(1-\beta)f^{\prime}(z)=p(z) \hspace{1cm} z\in\mathbb{D},
\end{align}
where $p$ is defined in \eqref{Eq-3.1}. By equating the coefficients of corresponding powers in the series expansions of $f$ and $p$, we obtain
\begin{align}\label{eq-3.3}
a_2=\frac{c_1}{(2-\beta)},\;a_3=\frac{c_2}{(3-2\beta)}\; \mbox{and}\;
a_4=\frac{c_3}{(4-3\beta)}.
\end{align}	
In view of \eqref{eq-3.3}, a straightforward calculation yields
\begin{align*}
	|a_3 -\mu a^2_2|=\frac{1}{(3-2\beta)}\;\vline\; c_2 -\frac{\mu(3- 2\beta)}{(2-\beta)^2}c^2_1\;\vline=\frac{1}{(3-2\beta)}|c_2-vc_1^2|,
\end{align*}
where $v=\{\mu(3- 2\beta)\}/(2-\beta)^2$. \\
	
\noindent{\bf Case 1.} If $v < 0$ (i.e., $\mu < 0$), then Lemma \ref{lem-3.1} yields
\begin{align*}
	|c_2-vc_1^2|\leq -4v+2=\frac{(8-12\mu)+(8\mu-8)\beta +2\beta^2}{(2-\beta)^2}.
\end{align*}
\noindent{\bf Case 2.} If $0 \leq v \leq 1$ (i.e., $0 \leq \mu \leq (2-\beta)^2/(3-2\beta)$), then by Lemma \ref{lem-3.1}, we see that
\begin{align*}
	|c_2-vc_1^2|\leq 2.
\end{align*}
\noindent{\bf Case 3.} If $v > 1$ (i.e., $\mu > (2-\beta)^2/(3-2\beta)$), then in view of Lemma \ref{lem-3.1}, we have
\begin{align*}
	|c_2-vc_1^2|\leq 4v-2=\frac{-(8-12\mu)-(8\mu-8)\beta -2\beta^2}{(2-\beta)^2}.
\end{align*}
Summarizing the above cases, we obtain the desired inequality. \vspace{2mm}
	
In the cases where $\mu < 0$ or $\mu >(2-\beta)^2/(3-2\beta)$, we consider the function $f_1 \in \mathcal{A}_{\beta}$ defined by
\begin{align*}
	f_1(z)=z\left(1+2\left(_2F_1\left[1,\frac{1}{1-\beta},\frac{2-\beta}{1-\beta},z\right]\right)\right)= z+ \sum_{n=2}^{\infty}\frac{2}{n- (n-1)\beta}z^n
\end{align*}
with $a_2={2}/{(2-\beta)}$, $a_3={2}/{(3-2\beta)}$. A straightforward calculation yields
\begin{align*}
	|a_3 -\mu a^2_2|=\;\vline\frac{(8-12\mu)+(8\mu-8)\beta +2\beta^2}{(2-\beta)^2(3-2\beta)}\vline=\pm\frac{(8-12\mu)+(8\mu-8)\beta +2\beta^2}{(2-\beta)^2(3-2\beta)},
\end{align*} 
the bound in the result is sharp. \vspace{1.2mm}
	
The inequality is sharp for $0 \leq \mu \leq (2-\beta)^2/(3-2\beta)$, with the extremal function $f_2 \in \mathcal{A}_{\beta}$ given by
\begin{align}\label{eq-3.4}
	\beta\frac{f_2(z)}{z}+(1-\beta)f^{\prime}_2(z)=\frac{1+z^2}{1-z^2}
\end{align}
with $a_2=0$, $a_3=2/(3-2\beta)$. The sharpness of the result follows by straightforward calculation. This concludes the proof.
\end{proof}
Note that for $\mu = 1$, the expression reduces to the classical Hankel determinant $H_{2,1}(f) = a_3 - a_2^2$; consequently, the following result is obtained as an immediate corollary of Theorem \ref{Th-3.1}.
\begin{cor}
If $f\in\mathcal{A}_{\beta}$ is of the form defined in \eqref{Eq-2.1}, then 
\begin{align*}
	|a_3 - a^2_{2}|\leq\frac{2}{(3-2\beta)}.
\end{align*}
The inequality is sharp for the function $f_2\in\mathcal{A}_{\beta}$ given by \eqref{eq-3.4}. 
\end{cor}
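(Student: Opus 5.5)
This is the specialization $\mu=1$ of Theorem \ref{Th-3.1}, so the plan is to check which case of that theorem $\mu=1$ lies in and read off the bound.

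First I would verify that $0\le 1\le (2-\beta)^2/(3-2\beta)$ for every $\beta\in[0,1]$. The upper inequality is equivalent to $3-2\beta\le 4-4\beta+\beta^2$, that is, $0\le 1-2\beta+\beta^2=(1-\beta)^2$, which always holds. Equality occurs only at $\beta=1$, where $\mu=1$ sits at the endpoint of the middle range. Since the middle case of Theorem \ref{Th-3.1} includes its endpoints, $\mu=1$ always falls in it. The middle case then gives $|a_3-a_2^2|\le 2/(3-2\beta)$.

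As a self-contained check, I would also run the argument directly. From \eqref{eq-3.3} we get
\begin{align*}
a_3-a_2^2=\frac{1}{3-2\beta}\left(c_2-vc_1^2\right),\qquad v=\frac{3-2\beta}{(2-\beta)^2}.
\end{align*}
The computation above shows $0<v\le 1$, so Lemma \ref{lem-3.1} gives $|c_2-vc_1^2|\le 2$, which yields the same bound.

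For sharpness I would use $f_2$ from \eqref{eq-3.4}. Since $(1+z^2)/(1-z^2)=1+2z^2+\cdots$, its coefficients are $c_1=0$ and $c_2=2$. Hence $a_2=0$ and $a_3=2/(3-2\beta)$, so equality holds.

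The only delicate point is the boundary value $\beta=1$. There $3-2\beta=1$, the class reduces to ${\rm Re}\,(f(z)/z)>0$, and the bound becomes $|a_3-a_2^2|\le 2$. The formulas in \eqref{eq-3.3} remain valid at $\beta=1$, and $f_2(z)=z(1+z^2)/(1-z^2)$ still attains equality, so this case needs only a remark and no separate argument.
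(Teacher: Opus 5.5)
Your proposal is correct and follows the paper's route. The paper obtains this corollary by setting $\mu=1$ in Theorem~\ref{Th-3.1}. Your verification that $1\le (2-\beta)^2/(3-2\beta)$ (equivalently $(1-\beta)^2\ge 0$) and your sharpness check with $f_2$ (where $c_1=0$, $c_2=2$) make explicit what the paper leaves implicit.
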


\section{\bf Moduli differences of logarithmic coefficients for functions and inverse functions in the class $\mathcal{A}_{\beta}$}
In 1985, de Branges \cite{Branges-AM-1985} solved the famous Bieberbach conjecture by proving that for any function $f\in\mathcal{S}$ of the form \eqref{Eq-2.1}, the inequality $ |a_n|\leq n $ holds for all $ n\geq 2 $, with equality attained by the Koebe function $ k(z):=z/(1-z)^2 $ or its rotations. This result naturally led to the question of whether the inequality $||a_{n+1}|-|a_n||\leq 1$ holds for $f\in\mathcal{S}$ when $ n\geq 2$. This problem was first studied by Goluzin \cite{Goluzin-1946} in an attempt to solve the Bieberbach conjecture. Later, in $1963$, Hayman \cite{Hayman-1963} showed that for all $ f\in\mathcal{S} $, there exists an absolute constant $A\geq 1$ such that $||a_{n+1}|-|a_n||\leq A$. The best known estimate to date is $ A=3.61 $, due to Grinspan \cite{Grinspan-1976}. On the other hand, for the class $ \mathcal{S} $, the sharp bound is known only for $n=2$ (see \cite[Theorem 3.11]{Duren-1983}), namely
\begin{align*}
	-1\leq |a_3|-|a_2|\leq 1.029...
\end{align*}
Similarly, for functions $ f\in\mathcal{S}^* $, Pommerenke \cite{Ch. Pommerenke-1971} conjectured that $ ||a_{n+1}|-|a_n||\leq 1 $, which was subsequently proved by Leung \cite{Leung-BLMS-1978} in $1978$. For convex functions, Li and Sugawa \cite{Li-Sugawa-CMFT2017} investigated the sharp bounds of $|a_{n+1}|-|a_n|$ for $n\geq 2$, and established the sharp results for $n=2, 3$. \vspace{2mm}

\noindent{\bf Logarithmic coefficient:} Let
\begin{align}\label{eq-4.1}
	F_{f}(z):=\log\dfrac{f(z)}{z}=2\sum_{n=1}^{\infty}\gamma_{n}(f)z^n, \;\; z\in\mathbb{D},\;\;\log 1:=0,
\end{align}
be a logarithmic function associated with $f\in\mathcal{S}$. The numbers $\gamma_{n}:=\gamma_{n}(f)$ are called the logarithmic coefficients of $f$. Differentiating \eqref{eq-4.1} and using \eqref{Eq-2.1}, a simple computation shows that 
\begin{align}\label{eq-4.2}
		\gamma_{1}=\dfrac{1}{2}a_{2}\; \mbox{and}\; \gamma_{2}=\dfrac{1}{2} \left(a_{3} -\dfrac{1}{2}a^2_{2}\right).
\end{align}
\noindent{\bf Logarithmic inverse coefficients:} Let $F$ be the inverse function of $f\in\mathcal{S}$ defined in a neighborhood of the origin with the Taylor series expansion
\begin{align}\label{eq-4.3}
	F(w):=f^{-1}(w)= w+\sum_{n=2}^{\infty} A_n w^n,
\end{align}
where we may choose $|w|<1/4$, as we know that the famous Koebe’s $1/4$-theorem ensures that, for each univalent function $f$ defined in $\mathbb{D}$, its inverse $f^{-1}$ exists at least on a disc of radius $1/4$. Let $f(z)=z+ \sum_{n=2}^{\infty}a_nz^n$ be a function in class $\mathcal{S}$. Since $f(f^{-1}(w))=w$ and using \eqref{eq-4.3} we obtain
\begin{align}\label{eq-4.4}
	A_2= -a_2\; \mbox{and}\; A_3=-a_3 +2a^2_{2}.
\end{align}
The notation of the logarithmic coefficient of inverse of $f$ has been studied by Ponnusamy \textit{et al.} \cite{Ponnusamy-Sharma-Wirths-RM-2018}. As with $f$, the logarithmic inverse coefficients $\Gamma_n:=\Gamma_n(F)$, $n\in\mathbb{N}$, of $F$ are defined by the equation
\begin{align}\label{eq-4.5}
	F_{f^{-1}}(w):=\log\left(\frac{f^{-1}(w)}{w}\right)=2\sum_{n=1}^{\infty} \Gamma_n(F) w^n \;\;\;\; \mbox{for} \;\;|w|<1/4.
\end{align}
By differentiating \eqref{eq-4.5} together with \eqref{eq-4.3}, using \eqref{eq-4.4} and then equating coefficients, we obtain
\begin{align}\label{eq-4.6}
	\Gamma_1=-\dfrac{1}{2}a_2\; \mbox{and}\; \Gamma_2=-\dfrac{1}{2}\left(a_3 -\dfrac{3}{2}a^2_{2}\right).
\end{align}
Inverse functions have been studied by several authors from different perspectives (see, for instance, \cite{Sim-Thomas-BAMS-2022, Sim-Thomas-S-2020}). Recently, Sim and Thomas \cite{Sim-Thomas-BAMS-2022} obtained sharp upper and lower bounds for the difference of the moduli of successive inverse coefficients for certain subclasses of univalent functions. Motivated by this prior research, including the recent works \cite{Allu-Shaji-JMAA-2025, Brown-IJMA-2010,Mandal-LMJ-2024}, the present paper focuses on determining sharp lower and upper bounds for $ |\gamma_2|-|\gamma_1|$ and $|\Gamma_2|-|\Gamma_1|$ for functions belonging to the class $\mathcal{A}_{\beta}$. \vspace{2mm}

To prove the following theorems, we rely on Lemma \ref{lem-4.1}, which plays a central role. For convenience, we restate it below.
\begin{lem}\cite{Sim-Thomas-S-2020}\label{lem-4.1}
Let $B_1$, $B_2$ and $B_3$ be numbers such that $B_1>0$, $B_2\in\mathbb{C}$ and $B_3\in\mathbb{R}$. Let $p\in\mathcal{P}$ of the form \eqref{Eq-3.1}. Define $\Psi_{+}(c_1,c_2)$ and $\Psi_{-}(c_1,c_2)$ by
\begin{align*}
	\Psi_{+}(c_1,c_2)=|B_2 c^2_1 +B_3 c_2| -|B_1 c_1|,
\end{align*}
and
\begin{align*}
	\Psi_{-}(c_1,c_2)=-\Psi_{+}(c_1,c_2).
\end{align*}
Then
\begin{align}\label{eq-4.7}
	\Psi_{+}(c_1,c_2)\leq
	\begin{cases}
		|4B_2 +2B_3|-2B_1, \;\;\;\;\mbox{if}\;\;|2B_2 +B_3|\geq |B_3|+ B_1,\vspace{2mm} \\ 2|B_3|,\hspace{2.8cm}\;\mbox{otherwise},
	\end{cases}
\end{align}
and
\begin{align}\label{eq-4.8}
	\Psi_{-}(c_1,c_2)\leq
	\begin{cases}
		2B_1 -B_4, \hspace{2.8cm}\mbox{if}\;\; B_1\geq B_4 +2|B_3|, \vspace{2mm} \\ 2B_1 \sqrt{\dfrac{2|B_3|}{B_4+2|B_3|}}, \hspace{1.3cm}\;\mbox{if}\;\;B^2_1\leq 2|B_3|(B_4 +2|B_3|), \vspace{2mm} \\ 2|B_3| +\dfrac{B^2_1}{B_4+2|B_3|}, \hspace{1cm}\;\mbox{otherwise},
	\end{cases}
\end{align}
where $B_4=|4B_2 +2B_3|$. All inequalities in \eqref{eq-4.7} and \eqref{eq-4.8} are sharp.
\end{lem}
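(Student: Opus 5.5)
The plan is to reduce both estimates to a one-variable extremal problem. I would use the classical parametrization of the first two coefficients of a Carathéodory function (Libera–Złotkiewicz): for $p\in\mathcal{P}$ of the form \eqref{Eq-3.1} there are $\tau,\zeta\in\overline{\mathbb{D}}$ with
\begin{align*}
c_1=2\tau,\qquad c_2=2\tau^2+2(1-|\tau|^2)\zeta .
\end{align*}
Conversely, every such pair $(\tau,\zeta)$ is realized by some $p\in\mathcal{P}$; this converse is what gives sharpness. Replacing $p(z)$ by $p(e^{i\theta}z)$ multiplies $c_1$ by $e^{i\theta}$ and $c_2$ by $e^{2i\theta}$. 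This leaves $|B_2c_1^2+B_3c_2|$ and $|B_1c_1|$ unchanged, so I may assume $\tau=t\in[0,1]$. Then
\begin{align*}
B_2c_1^2+B_3c_2=(4B_2+2B_3)t^2+2B_3(1-t^2)\zeta,\qquad |B_1c_1|=2B_1t .
\end{align*}

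For $\Psi_+$, I would apply the triangle inequality with $|\zeta|\le 1$ and $B_4=|4B_2+2B_3|$. This gives $\Psi_+\le g(t):=(B_4-2|B_3|)t^2-2B_1t+2|B_3|$. If $B_4\ge 2|B_3|$, then $g$ is convex, so its maximum on $[0,1]$ is $\max\{g(0),g(1)\}=\max\{2|B_3|,\,B_4-2B_1\}$. If $B_4<2|B_3|$, then $g$ is concave with $g'(0)=-2B_1<0$, so the maximum is $g(0)$. The inequality $g(1)\ge g(0)$ is exactly $|2B_2+B_3|\ge |B_3|+B_1$, and this yields \eqref{eq-4.7}. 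For sharpness, $t=1$ gives $p(z)=(1+z)/(1-z)$. The choice $t=0$, $\zeta=1$ gives $p(z)=(1+z^2)/(1-z^2)$, where $c_1=0$, $c_2=2$ and $\Psi_+=2|B_3|$.

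For $\Psi_-$, we have $\Psi_-=2B_1t-|(4B_2+2B_3)t^2+2B_3(1-t^2)\zeta|$, so I would minimize the modulus over $\zeta\in\overline{\mathbb{D}}$. Since $\zeta$ ranges over the whole closed disc, this minimum equals $\max\{0,\,B_4t^2-2|B_3|(1-t^2)\}$. It is attained by a suitable $\zeta$: one with the phase chosen to oppose the first term, scaled down to cancel it when possible. Hence
\begin{align*}
\Psi_-\le h(t):=2B_1t-\max\{0,\,(B_4+2|B_3|)t^2-2|B_3|\},
\end{align*}
and equality holds for the appropriate $\zeta$. Put $t_0=\sqrt{2|B_3|/(B_4+2|B_3|)}$. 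On $[0,t_0]$ we have $h(t)=2B_1t$, which is increasing. On $[t_0,1]$, $h$ is the concave quadratic $2B_1t-(B_4+2|B_3|)t^2+2|B_3|$, with vertex at $t^*=B_1/(B_4+2|B_3|)$. This splits into three cases.
\begin{itemize}
\item If $t^*\le t_0$, i.e.\ $B_1^2\le 2|B_3|(B_4+2|B_3|)$, the maximum is $2B_1t_0$.
\item If $t^*\ge 1$, i.e.\ $B_1\ge B_4+2|B_3|$, the maximum is $h(1)=2B_1-B_4$.
\item Otherwise the maximum is $h(t^*)=2|B_3|+B_1^2/(B_4+2|B_3|)$.
\end{itemize}
These are precisely the three cases of \eqref{eq-4.8}. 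Sharpness follows by taking the maximizing $t$ together with the cancelling $\zeta$ and realizing $(c_1,c_2)$ by a function in $\mathcal{P}$ via the converse of the parametrization.

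The step I expect to need the most care is the bookkeeping in the $\Psi_-$ case analysis. One must check that the regime boundaries match the stated conditions, including the degenerate case $B_3=0$, where $t_0=0$. One must also justify that the minimizing $\zeta$ lies in $\overline{\mathbb{D}}$, so the bound is attained. The triangle-inequality step for $\Psi_+$ is routine once the reduction to real $t\ge 0$ is made.
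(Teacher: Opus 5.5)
The paper gives no proof of this lemma (it is quoted from Sim--Thomas), and your argument is a correct, complete proof along the standard lines of that source. It uses the representation $c_1=2\tau$, $c_2=2\tau^2+2(1-|\tau|^2)\zeta$ together with its converse, rotation to $\tau=t\in[0,1]$, the triangle inequality for $\Psi_{+}$, and the exact disc distance $\max\{0,(B_4+2|B_3|)t^2-2|B_3|\}$ for $\Psi_{-}$, followed by a one-variable maximization whose regime boundaries reproduce \eqref{eq-4.7} and \eqref{eq-4.8} exactly. The only loose end is the trivial case $B_2=B_3=0$, where $t_0$ and $t^*$ are undefined; there $h(t)=2B_1t$ gives the first-branch value $2B_1=2B_1-B_4$. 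Note also that your argument never uses the hypothesis $B_3\in\mathbb{R}$.
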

By Lemma \ref{lem-4.1}, we derive the following sharp estimates for the difference $|\gamma_2| - |\gamma_1|$, where $\gamma_1$ and $\gamma_{2}$ are the logarithmic coefficients of functions in the class $\mathcal{A}_{\beta}$.
\begin{thm}\label{th-4.1}
Let $0\leq\beta\leq 1$. If $f\in\mathcal{A}_{\beta}$ is given by \eqref{Eq-2.1} and $\gamma_{1}, \gamma_{2} $ are given by \eqref{eq-4.2}, then we have 
\begin{align*}
	-\frac{1}{\sqrt{5-6\beta+2\beta^2}}\leq|\gamma_2|-|\gamma_1|\leq \frac{1}{(3-2\beta)}.
\end{align*}
Both inequalities are sharp.
\end{thm}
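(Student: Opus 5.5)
The plan is to reduce everything to Lemma \ref{lem-4.1} through the Carathéodory representation already used in the proof of Theorem \ref{Th-3.1}.

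\textbf{Step 1: rewrite $|\gamma_2|-|\gamma_1|$ in the form of Lemma \ref{lem-4.1}.} For $f\in\mathcal{A}_{\beta}$ write $\beta f(z)/z+(1-\beta)f'(z)=p(z)$ with $p\in\mathcal{P}$. By \eqref{eq-3.3} we have $a_2=c_1/(2-\beta)$ and $a_3=c_2/(3-2\beta)$. Substituting into \eqref{eq-4.2} gives
\begin{align*}
|\gamma_2|-|\gamma_1|=\left|B_2c_1^2+B_3c_2\right|-|B_1c_1|=\Psi_{+}(c_1,c_2),
\end{align*}
with
\begin{align*}
B_1=\frac{1}{2(2-\beta)}>0,\quad B_2=-\frac{1}{4(2-\beta)^2},\quad B_3=\frac{1}{2(3-2\beta)}\in\mathbb{R}.
\end{align*}

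\textbf{Step 2: compute the auxiliary quantities.} First,
\begin{align*}
2B_2+B_3=\frac{1}{2(3-2\beta)}-\frac{1}{2(2-\beta)^2}.
\end{align*}
Since $(2-\beta)^2-(3-2\beta)=(1-\beta)^2\ge 0$, this quantity is nonnegative and at most $B_3$. Next,
\begin{align*}
B_4=|4B_2+2B_3|=\frac{(1-\beta)^2}{(3-2\beta)(2-\beta)^2},\qquad B_4+2|B_3|=\frac{5-6\beta+2\beta^2}{(3-2\beta)(2-\beta)^2}.
\end{align*}

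\textbf{Step 3: upper bound.} Because $|2B_2+B_3|\le B_3<|B_3|+B_1$, the first alternative in \eqref{eq-4.7} never occurs. Hence
\begin{align*}
\Psi_{+}\le 2|B_3|=\frac{1}{3-2\beta}.
\end{align*}

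\textbf{Step 4: lower bound.} We must apply \eqref{eq-4.8} to $\Psi_{-}=|\gamma_1|-|\gamma_2|$, which means determining which case applies. This case analysis is the main (though elementary) obstacle.

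We check the second condition $B_1^2\le 2|B_3|(B_4+2|B_3|)$. It reads
\begin{align*}
\frac{1}{4(2-\beta)^2}\le\frac{5-6\beta+2\beta^2}{(3-2\beta)^2(2-\beta)^2},
\end{align*}
that is, $(3-2\beta)^2\le 4(5-6\beta+2\beta^2)$. This is equivalent to $4\beta^2-12\beta+11\ge 0$, which holds for all real $\beta$ since the discriminant is $144-176<0$.

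So the second case of \eqref{eq-4.8} applies and gives
\begin{align*}
\Psi_{-}\le 2B_1\sqrt{\frac{2|B_3|}{B_4+2|B_3|}}=\frac{1}{2-\beta}\cdot\frac{2-\beta}{\sqrt{5-6\beta+2\beta^2}}=\frac{1}{\sqrt{5-6\beta+2\beta^2}}.
\end{align*}
This is exactly the claimed lower bound. For safety I would also confirm that the two cases of \eqref{eq-4.8} are mutually consistent at the boundary, so that no ambiguity arises.

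\textbf{Step 5: sharpness.} For the upper bound, take $f_2$ from \eqref{eq-3.4}, i.e. $p(z)=(1+z^2)/(1-z^2)$ with $c_1=0$, $c_2=2$. Then $\gamma_1=0$ and $|\gamma_2|=1/(3-2\beta)$.

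For the lower bound, Lemma \ref{lem-4.1} asserts sharpness, so there is $p^*\in\mathcal{P}$ whose $(c_1,c_2)$ attains the bound. The remaining point is that every $p\in\mathcal{P}$ arises from some $f\in\mathcal{A}_{\beta}$, so that extremality transfers to the class:
\begin{itemize}
\item For $\beta<1$, solve the linear ODE. With $f(z)=z\,g(z)$ and $g(0)=1$, the relation becomes $g+(1-\beta)zg'=p$, so $g(z)=\int_0^1 p(t^{1-\beta}z)\,dt$ (coefficientwise $c_n/(1+(1-\beta)n)$), which is analytic.
\item For $\beta=1$, take $f=zp$.
\end{itemize}
In either case, applying this to $p^*$ yields an extremal function in $\mathcal{A}_{\beta}$ realizing $-1/\sqrt{5-6\beta+2\beta^2}$. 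If desired, I would exhibit $p^*$ explicitly, of the form $p(z)=(1-z^2)/(1-2tz+z^2)$ with $c_1=2t$, $c_2=4t^2-2$ and a suitable $t\in(0,1)$ chosen by optimizing $|B_1c_1|-|B_2c_1^2+B_3c_2|$.
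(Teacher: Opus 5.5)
Your proposal is correct and follows essentially the same route as the paper. Both express $|\gamma_2|-|\gamma_1|$ as $\Psi_{+}(c_1,c_2)$ via $a_2=c_1/(2-\beta)$ and $a_3=c_2/(3-2\beta)$, then apply the second alternative of \eqref{eq-4.7} and the middle case of \eqref{eq-4.8} in Lemma~\ref{lem-4.1} (through $4\beta^2-12\beta+11>0$). Both get sharpness from $p(z)=(1+z^2)/(1-z^2)$ and from $p(z)=(1-z^2)/(1-2tz+z^2)$ with $t=(2-\beta)/\sqrt{5-6\beta+2\beta^2}$.

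The differences are cosmetic. You absorb the factor $1/(2(2-\beta))$ into $B_1,B_2,B_3$, which is harmless because Lemma~\ref{lem-4.1} is homogeneous in $(B_1,B_2,B_3)$. You also explicitly check that every $p\in\mathcal{P}$ arises from some $f\in\mathcal{A}_{\beta}$, a point the paper takes for granted.
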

\begin{proof}
In view of \eqref{eq-3.3} and \eqref{eq-4.2}, an elementary calculation shows that
\begin{align}\label{eq-4.9}
	\nonumber|\gamma_2|-|\gamma_1|&=\;\vline\; \frac{1}{2} a_{3} -\frac{1}{4}a^2_{2}\;\vline - \;\vline\; \frac{1}{2} a_{2}\;\vline \\&\nonumber=\;\vline\; \frac{1}{2}\cdot\frac{c_2}{(3-2\beta)} -\frac{1}{4}\cdot\frac{c^2_1}{(2-\beta)^2}\;\vline - \;\vline\; \frac{1}{2}\cdot\frac{c_1}{(2-\beta)}\;\vline\\&\nonumber =\;\vline\; -\frac{1}{4(2-\beta)^2}c^2_1 +\frac{1}{2(3-2\beta)}c_2\;\vline - \;\vline\; \frac{1}{2(2-\beta)} c_1\;\vline\\&\nonumber = \frac{1}{2(2-\beta)} \left(|B_2 c^2_1 +B_3c_2| - |B_1c_1| \right)\\& =\frac{1}{2(2-\beta)}\Psi_{+}(c_1,c_2),
\end{align}
where
\begin{align*}
	B_1:=1,\;\; B_2:=-\frac{1}{2(2-\beta)} \;\;\mbox{and}\;\; B_3:=\frac{(2-\beta)}{(3-2\beta)}.
\end{align*}
\noindent{\bf Estimate of the upper bound:} For the upper bound, a simple calculation shows that
\begin{align*}
	|2B_2 + B_3| = \frac{(1-\beta)^2}{(2-\beta)(3-2\beta)} \quad \text{and} \quad |B_3| + B_1 = \frac{5-3\beta}{3-2\beta}.
\end{align*}
Since the inequality $|2B_2 + B_3| \geq |B_3| + B_1$ fails for all $\beta \in [0, 1]$ (as it would require $2\beta^2 - 9\beta + 9 \leq 0$), we apply Lemma \ref{lem-4.1} to obtain
\begin{align*}
	\Psi_{+}(c_1,c_2)\leq 2|B_3|=\frac{2(2-\beta)}{(3-2\beta)}.
\end{align*}
Consequently, applying \eqref{eq-4.9}, we obtain
\begin{align}\label{eq-4.10}
	|\gamma_2|-|\gamma_1|\leq \frac{1}{(3-2\beta)}.
\end{align}
Equality in \eqref{eq-4.10} is attained for the function $f \in \mathcal{A}$ defined in \eqref{eq-3.2}, where
\begin{align*}
	p(z)=\frac{1+z^2}{1 -z^2},\;\;\;\; z\in\mathbb{D}.
\end{align*}
For $c_1=0$ and $c_2=2$, equality is attained in \eqref{eq-4.10}, confirming the sharpness of the result.\vspace{2mm}
	
\noindent{\bf Estimate of the lower bound:} For the lower bound, we have
\begin{align*}
	B_4=|4B_2 +2B_3|=\frac{2(1-\beta)^2}{(2-\beta)(3-2\beta)} \hspace{0.5cm} \mbox{and}\hspace{0.5cm} B_4 +2|B_3|=\frac{10-12\beta+4\beta^2}{(2-\beta)(3-2\beta)}.
\end{align*}
It is easy to see that, the inequality $B_1\geq B_4 +2|B_3|$ implies that $4+19\beta+2\beta^2\leq0$ for all $\beta\in[0,1]$, which is not true. Again, we have
\begin{align*}
	2|B_3|(B_4 +2|B_3|) =\frac{4(5-6\beta+2\beta^2)}{(3-2\beta)^2}.
\end{align*}
Thus, the condition $B^2_1\leq 2|B_3|(B_4 +2|B_3|)$ follows that $11-12\beta+4\beta^2\geq 0$ for all $\beta\in[0,1]$, which is true. Thus, by Lemma \ref{lem-4.1}, we have
\begin{align*}
	\Psi_{-}(c_1,c_2)\leq 2B_1 \sqrt{\dfrac{2|B_3|}{B_4+2|B_3|}} =\frac{2(2-\beta)}{\sqrt{5-6\beta+2\beta^2}}.
\end{align*}
Clearly, we observe that
\begin{align*}
	\Psi_{+}(c_1,c_2)=-\Psi_{-}(c_1,c_2)\geq -\frac{2(2-\beta)}{\sqrt{5-6\beta+2\beta^2}}.
\end{align*}
Hence, from \eqref{eq-4.9}, we conclude that
\begin{align}\label{eq-4.11}
	|\gamma_2|-|\gamma_1|\geq -\frac{1}{\sqrt{5-6\beta+2\beta^2}}.
\end{align}
The inequality \eqref{eq-4.11} is sharp for the function $f\in\mathcal{A}$ given by \eqref{eq-3.2} with
\begin{align*}
	p(z)=\frac{1-z^2}{1-\frac{2(2-\beta)}{\sqrt{5-6\beta+2\beta^2}}z +z^2},\;\;\;\; z\in\mathbb{D}.
\end{align*}
This completes the proof.
\end{proof}
For $\beta = 0$, Theorem \ref{th-4.1} reduces to the following result for the logarithmic coefficients of the class of functions with bounded turning.
\begin{cor}
If $f\in\mathcal{R}$ is given by \eqref{Eq-2.1} and $\gamma_{1}, \gamma_{2} $ are given by \eqref{eq-4.2}, then we have 
\begin{align*}
	-\frac{1}{\sqrt{5}}\leq|\gamma_2|-|\gamma_1|\leq \frac{1}{3}.
\end{align*}
Both inequalities are sharp.
\end{cor}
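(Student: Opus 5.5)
This corollary is simply the case $\beta=0$ of Theorem \ref{th-4.1}, so the plan is to specialize that theorem and check that nothing degenerates at the endpoint. First I would note that for $\beta=0$ the defining condition of $\mathcal{A}_{\beta}$ becomes ${\rm Re}\, f'(z)>0$, so $\mathcal{A}_0=\mathcal{R}$, as already remarked in the introduction. The hypotheses of Theorem \ref{th-4.1} therefore hold for every $f\in\mathcal{R}$ of the form \eqref{Eq-2.1}.

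Next I would substitute $\beta=0$ into the two bounds. The upper bound $1/(3-2\beta)$ becomes $1/3$. The lower bound $-1/\sqrt{5-6\beta+2\beta^2}$ becomes $-1/\sqrt{5}$. This gives
\begin{align*}
	-\frac{1}{\sqrt{5}}\leq|\gamma_2|-|\gamma_1|\leq \frac{1}{3}.
\end{align*}

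For sharpness I would take the extremal functions from the proof of Theorem \ref{th-4.1} with $\beta=0$, so that \eqref{eq-3.2} reduces to $f'(z)=p(z)$. The upper bound is attained by $f'(z)=(1+z^2)/(1-z^2)$. Here $c_1=0$ and $c_2=2$, hence $a_2=0$ and $a_3=2/3$. Then $\gamma_1=0$ and $\gamma_2=1/3$, which gives equality.

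The lower bound is attained by $f'(z)=(1-z^2)/(1-\frac{4}{\sqrt5}z+z^2)$. This is a Carathéodory function because $4/\sqrt5<2$. Its coefficients are $c_1=4/\sqrt5$ and $c_2=c_1^2-2=6/5$, so $a_2=2/\sqrt5$ and $a_3=2/5$. Then $\gamma_1=1/\sqrt5$ and $\gamma_2=\frac12(2/5-1/5)=1/10$. This gives $|\gamma_2|-|\gamma_1|=1/10-1/\sqrt5$, which is \emph{not} $-1/\sqrt5$.

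This last check is the main obstacle. As written, the extremal function from the proof of Theorem \ref{th-4.1} does not attain $-1/\sqrt5$ at $\beta=0$, so the sharpness of the lower bound cannot simply be inherited from that theorem. I would have to go back to the lower-bound case of Lemma \ref{lem-4.1}, with $B_1=1$, $B_2=-1/4$, $B_3=2/3$, and check whether the value $-1/\sqrt5$ can actually be reached.

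The first thing I would try is to let $c_1=c\in(0,2]$ be real and take $c_2=2e^{i\theta}-c^2/2$. Here $\theta$ is chosen so that $|B_2c_1^2+B_3c_2|$ is as small as possible, subject to the constraint $|c_2-c_1^2/2|\le 2-c_1^2/2$. I would then minimize the resulting expression over $c$. If this minimum turns out to differ from $-1/\sqrt5$, the lower bound in the corollary, and the corresponding case of Theorem \ref{th-4.1}, would need to be corrected rather than just specialized.
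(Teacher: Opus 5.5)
Your reduction is the same as the paper's: the corollary is Theorem~\ref{th-4.1} at $\beta=0$, with $\mathcal{A}_0=\mathcal{R}$. Your handling of the two bounds and of the upper-bound extremal function is correct. The genuine problem is the sharpness check for the lower bound, which rests on an arithmetic slip. With $c_1=4/\sqrt5$ and $c_2=6/5$ you correctly get $a_2=2/\sqrt5$ and $a_3=2/5$. But then $a_2^2=4/5$, so
\begin{align*}
\gamma_2=\frac12\left(a_3-\frac12a_2^2\right)=\frac12\left(\frac25-\frac25\right)=0,
\end{align*}
not $1/10$. You appear to have inserted $\frac14a_2^2=\frac15$ inside the factor $\frac12(\cdot)$. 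Hence $|\gamma_2|-|\gamma_1|=-1/\sqrt5$ exactly, and the extremal function from the proof of Theorem~\ref{th-4.1} does attain the lower bound. It is built precisely so that $\gamma_2=0$ at the largest $|c_1|$ for which this is possible. So the ``main obstacle'' does not exist, and your suggestion that the corollary or Theorem~\ref{th-4.1} may need correcting is unfounded. As written, your proposal leaves the sharpness of the lower bound unproved and asserts a false non-attainment.

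Your fallback would not have repaired this either. The curve $c_2=2e^{i\theta}-c^2/2$ is not the admissible set of second coefficients. At $c=4/\sqrt5$ it cannot reach $c_2=6/5$, since that would need $|e^{i\theta}|=7/5$. Along this curve the minimum of $|\gamma_2|-|\gamma_1|$ is only $-1/\sqrt7$, which would have seemed to confirm your mistaken doubt. The correct description, for $c_1=c\in[0,2]$ after a rotation, is $c_2=\frac{c^2}{2}+\left(2-\frac{c^2}{2}\right)\zeta$ with $|\zeta|\le1$. Then $\gamma_2=\frac{c^2}{48}+\frac{(4-c^2)\zeta}{12}$, so $\min_{|\zeta|\le1}|\gamma_2|=\max\{0,(5c^2-16)/48\}$. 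The function $\max\{0,(5c^2-16)/48\}-c/4$ equals $-c/4$ on $[0,4/\sqrt5]$. It is increasing on $[4/\sqrt5,2]$, since its derivative $5c/24-1/4$ is positive there. So its minimum is $-1/\sqrt5$, attained at $c=4/\sqrt5$ and $\zeta=-1$, i.e.\ $c_2=6/5$. That is exactly the function you already had.
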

By Lemma \ref{lem-4.1}, we derive the following sharp estimates for the difference $|\Gamma_2| - |\Gamma_1|$, where $\Gamma_1$ and $\Gamma_2$ are the logarithmic coefficients of the inverse function for functions in the class $\mathcal{A}_{\beta}$.
\begin{thm}\label{th-4.2}
Let $0\leq\beta\leq 1$. If $f\in\mathcal{A}_{\beta}$ is given by \eqref{Eq-2.1} and $ \Gamma_{1}, \Gamma_{2} $ are given by \eqref{eq-4.6}, then we have 
\begin{align*}
	|\Gamma_2|-|\Gamma_1|\leq 
	\begin{cases}
		\dfrac{1}{(3-2\beta)} \hspace{2.3cm}\mbox{for}\;\; 0\leq\beta<1, \vspace{2mm}\\ \dfrac{(-1+5\beta -3\beta^2)}{(2-\beta)^2 (3-2\beta)} \hspace{0.8cm} \mbox{for}\;\;\beta=1,
	\end{cases}
\end{align*}
and
\begin{align*}
	|\Gamma_2|-|\Gamma_1|\geq -\frac{1}{\sqrt{3(3-2\beta)}}.
\end{align*}
All the inequalities are sharp.
\end{thm}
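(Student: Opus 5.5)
We follow the scheme of the proof of Theorem \ref{th-4.1}. We write $f\in\mathcal{A}_{\beta}$ through \eqref{eq-3.2} with $p\in\mathcal{P}$, and use \eqref{eq-3.3}, that is $a_2=c_1/(2-\beta)$ and $a_3=c_2/(3-2\beta)$. Substituting into \eqref{eq-4.6} gives
\begin{align*}
|\Gamma_2|-|\Gamma_1|&=\left|\frac{c_2}{2(3-2\beta)}-\frac{3c_1^2}{4(2-\beta)^2}\right|-\frac{|c_1|}{2(2-\beta)}\\
&=\frac{1}{2(2-\beta)}\Psi_{+}(c_1,c_2),
\end{align*}
with
\begin{align*}
B_1=1,\qquad B_2=-\frac{3}{2(2-\beta)},\qquad B_3=\frac{2-\beta}{3-2\beta}.
\end{align*}
Both bounds then come from Lemma \ref{lem-4.1}. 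The work lies in deciding which case of \eqref{eq-4.7} and of \eqref{eq-4.8} applies for each $\beta\in[0,1]$.

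\textbf{Upper bound.} We compute
\begin{align*}
|2B_2+B_3|=\frac{5-2\beta-\beta^2}{(2-\beta)(3-2\beta)},\qquad |B_3|+B_1=\frac{5-3\beta}{3-2\beta}.
\end{align*}
The condition $|2B_2+B_3|\ge |B_3|+B_1$ is then equivalent to $4\beta^2-9\beta+5=(1-\beta)(5-4\beta)\le 0$. On $[0,1]$ this holds only at $\beta=1$.
\begin{itemize}
\item For $0\le\beta<1$ we are in the second case of \eqref{eq-4.7}. So $\Psi_{+}\le 2|B_3|$, which gives $|\Gamma_2|-|\Gamma_1|\le 1/(3-2\beta)$.
\item For $\beta=1$ the first case applies, with $\Psi_{+}\le |4B_2+2B_3|-2B_1=4-2=2$. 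This yields the value $1$, which agrees with the stated expression $(-1+5\beta-3\beta^2)/((2-\beta)^2(3-2\beta))$ at $\beta=1$.
\end{itemize}
Sharpness in the first regime is witnessed by $p(z)=(1+z^2)/(1-z^2)$, i.e. $c_1=0$, $c_2=2$, exactly as for Theorem \ref{th-4.1}. At $\beta=1$ we take $p(z)=(1+z)/(1-z)$, where $c_1=c_2=2$ realise the first-case extremum.

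\textbf{Lower bound.} Here $B_4=|4B_2+2B_3|=2(5-2\beta-\beta^2)/((2-\beta)(3-2\beta))$. Hence
\begin{align*}
B_4+2|B_3|=\frac{18-12\beta}{(2-\beta)(3-2\beta)}=\frac{6}{2-\beta}.
\end{align*}
The first case of \eqref{eq-4.8} would need $1\ge 6/(2-\beta)$, which is impossible. Moreover $2|B_3|(B_4+2|B_3|)=12/(3-2\beta)\ge 1$, so the middle case applies:
\begin{align*}
\Psi_{-}\le 2\sqrt{\frac{2(2-\beta)/(3-2\beta)}{6/(2-\beta)}}=\frac{2(2-\beta)}{\sqrt{3(3-2\beta)}}.
\end{align*}
Dividing by $2(2-\beta)$ gives $|\Gamma_2|-|\Gamma_1|\ge -1/\sqrt{3(3-2\beta)}$.

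For sharpness we take the extremal function of the middle case of Lemma \ref{lem-4.1}, namely $p(z)=(1-z^2)/(1-tz+z^2)$, for which $c_1=t$ and $c_2=t^2-2$. We choose $t\in(0,2)$ so that $\Psi_{+}(t,t^2-2)$ equals $-2(2-\beta)/\sqrt{3(3-2\beta)}$. Minimising $|B_2t^2+B_3(t^2-2)|-t$ should give $t=2/\sqrt{3(3-2\beta)}$, after which equality can be checked directly.

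The main obstacle is the case bookkeeping in the upper bound, in particular locating the threshold $\beta=1$ and checking that both branches give the same value there. A secondary point is pinning down the exact parameter $t$ in the extremal $p$ for the lower bound.
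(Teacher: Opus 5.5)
\textbf{Gap: the sharpness construction for the lower bound fails.} Your derivation of both inequalities is correct and coincides with the paper's argument. It uses the same substitution, Lemma \ref{lem-4.1} with $B_1=1$, the same threshold $(1-\beta)(5-4\beta)\le 0$ for the upper bound, and the middle case of \eqref{eq-4.8} for the lower bound. Your $B_2,B_3$ are the negatives of the paper's, which is harmless. Your upper-bound extremals are also the paper's. The problem is the lower bound's sharpness, which the statement asserts.

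The family $p(z)=(1-z^2)/(1-tz+z^2)$ that you borrow from Theorem \ref{th-4.1} gives $c_1=t\in[0,2]$ and $c_2=t^2-2$. Hence
\[
B_2c_1^2+B_3c_2=(B_2+B_3)t^2-2B_3,\qquad B_2+B_3=-\frac{1+2\beta-2\beta^2}{2(2-\beta)(3-2\beta)}<0,\quad B_3>0.
\]
So this quantity never vanishes on your family. Moreover $\Psi_{+}\ge 2B_3-t\ge 2B_3-2=-2(1-\beta)/(3-2\beta)$, which is strictly larger than $-2(2-\beta)/\sqrt{3(3-2\beta)}$ for every $\beta\in[0,1]$. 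For instance, at $\beta=0$ the best member of the family is $t=2$, i.e.\ $(1+z)/(1-z)$. It gives $|\Gamma_2|-|\Gamma_1|=5/12-1/2=-1/12$, not $-1/3$. Your guess $t=2/\sqrt{3(3-2\beta)}$ is also off by a factor $2-\beta$.

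\textbf{The fix.} The middle case of the lemma is attained by making $\Gamma_2=0$ while $|c_1|$ is as large as possible. Here that requires
\[
c_2=-\frac{B_2c_1^2}{B_3}=\frac{3(3-2\beta)}{2(2-\beta)^2}\,c_1^2 .
\]
At the optimal $c_1$ this equals $2$. That is the \emph{other} endpoint of the admissible range $c_2\in[t^2-2,2]$ for real $c_1=t$, not the endpoint $t^2-2$ used in Theorem \ref{th-4.1}, where the signs differ. The correct extremal, which is the paper's, is
\[
p(z)=\frac{1+tz+z^2}{1-z^2}=\frac{2+t}{4}\cdot\frac{1+z}{1-z}+\frac{2-t}{4}\cdot\frac{1-z}{1+z},\qquad t=\frac{2(2-\beta)}{\sqrt{3(3-2\beta)}}.
\]
This $p$ lies in $\mathcal{P}$ because $t\le 2$, which is equivalent to $(2-\beta)^2\le 3(3-2\beta)$ and holds on $[0,1]$. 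It has $c_1=t$ and $c_2=2$, so $B_2t^2+2B_3=0$ and $\Gamma_2=0$. Therefore $|\Gamma_2|-|\Gamma_1|=-t/(2(2-\beta))=-1/\sqrt{3(3-2\beta)}$.
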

\begin{proof}
In view of \eqref{eq-3.3} and \eqref{eq-4.6}, an elementary calculation shows that
\begin{align}\label{eq-4.12}
	\nonumber|\Gamma_2|-|\Gamma_1|&=\;\vline\; -\frac{1}{2}a_{3} +\frac{3}{4}a^2_{2}\;\vline - \;\vline\; -\frac{1}{2} a_{2}\;\vline\\&\nonumber=\;\vline\; -\frac{1}{2}\cdot\frac{c_2}{(3-2\beta)} +\frac{3}{4}\cdot \frac{c^2_1}{(2-\beta)^2}\;\vline - \;\vline\; -\frac{1}{2}\cdot\frac{c_1}{(2-\beta)}\;\vline\\&\nonumber =\;\vline\; \frac{3}{4(2-\beta)^2}c^2_1 -\frac{1}{2(3-2\beta)}c_2\;\vline - \;\vline\; \frac{1}{2(2-\beta)} c_1\;\vline\\&\nonumber = \frac{1}{2(2-\beta)} \left(|B_2 c^2_1 +B_3c_2| - |B_1c_1| \right)\\& =\frac{1}{2(2-\beta)}\Psi_{+}(c_1,c_2),
\end{align}
where
\begin{align*}
	B_1=1,\;\; B_2=\frac{3}{2(2-\beta)} \;\;\mbox{and}\;\; B_3=- \frac{(2-\beta)}{(3-2\beta)}.
\end{align*}
\noindent{\bf Estimate of the upper bound:} For the upper bound, we see that
\begin{align*}
	|2B_2 +B_3|=\frac{(5-2\beta-\beta^2)}{(2-\beta)(3-2\beta)} \hspace{0.5cm} \mbox{and}\hspace{0.5cm} |B_3|+ B_1=\frac{(5-3\beta)}{(3-2\beta)}.
\end{align*}
From the inequality $|2B_2 +B_3|\geq |B_3|+ B_1$, it follows that $5-9\beta+4\beta^2\leq 0$, which is true if and only if $\beta=1$, by Lemma \ref{lem-4.1}, we obtain
\begin{align*}
	\Psi_{+}(c_1,c_2)\leq |4B_2 +2B_3|-2B_1=-\frac{2(1-5\beta +3\beta^2)}{(2-\beta)(3-2\beta)}.
\end{align*}
For $0\leq\beta<1$, applying Lemma \ref{lem-4.1}, we obtain
\begin{align*}
	\Psi_{+}(c_1,c_2)\leq 2|B_3|=\frac{2(2-\beta)}{(3-2\beta)}.
\end{align*}
As a result, from \eqref{eq-4.12}, we obtain
\begin{align}\label{eq-4.13}
	|\Gamma_2|-|\Gamma_1|\leq 
	\begin{cases}
		\dfrac{1}{(3-2\beta)} \hspace{2.3cm}\mbox{for}\;\; 0\leq\beta<1, \vspace{2mm}\\ \dfrac{(-1+5\beta -3\beta^2)}{(2-\beta)^2 (3-2\beta)} \hspace{0.8cm} \mbox{for}\;\;\beta=1.
	\end{cases}
\end{align}
The first inequality in \eqref{eq-4.13} is sharp for the function $f\in\mathcal{A}$ given by \eqref{eq-3.2} with
\begin{align*}
	p(z)=\frac{1+z^2}{1 -z^2},\;\;\;\; z\in\mathbb{D}.
\end{align*}
In this case, we have $c_1=0$ and $c_2=2$, which establishes the sharpness of the first inequality in \eqref{eq-4.13}. \vspace{2mm}

The second inequality in \eqref{eq-4.13} is sharp for the function $f\in\mathcal{A}$ given by \eqref{eq-3.2} with
\begin{align*}
	p(z)=\frac{1+z}{1 -z},\;\;\;\; z\in\mathbb{D}.
\end{align*}
In this case, $c_1=2$ and $c_2=2$, which demonstrates the sharpness of the second inequality in \eqref{eq-4.13}. \vspace{2mm}
	
\noindent{\bf Estimate of the lower bound:} For the lower bound, we see that
\begin{align*}
	B_4=|4B_2 +2B_3|=\frac{2(5-2\beta-\beta^2)}{(2-\beta)(3-2\beta)} \hspace{0.5cm} \mbox{and}\hspace{0.5cm} B_4 +2|B_3|=\frac{6}{(2-\beta)}.
\end{align*}
It follows that $B_1\not\geq B_4 +2|B_3|$ for all $\beta\in[0,1]$. Again, we have
\begin{align*}
	2|B_3|(B_4 +2|B_3|)=\frac{12}{(3-2\beta)},
\end{align*} 
the condition $B^2_1\leq 2|B_3|(B_4 +2|B_3|)$ implies that $9+2\beta\geq 0$ for all $\beta\in[0,1]$, which is true. Thus, by Lemma \ref{lem-4.1}, we have
\begin{align*}
	\Psi_{-}(c_1,c_2)\leq 2B_1 \sqrt{\dfrac{2|B_3|}{B_4+2|B_3|}} =\frac{2(2-\beta)}{\sqrt{3(3-2\beta)}}.
\end{align*}
One can easily observe that
\begin{align*}
	\Psi_{+}(c_1,c_2)=-\Psi_{-}(c_1,c_2)\geq -\frac{2(2-\beta)}{\sqrt{3(3-2\beta)}}.
\end{align*}
Hence, from \eqref{eq-4.12}, we derive that
\begin{align}\label{eq-4.14}
	|\Gamma_2|-|\Gamma_1|\geq -\frac{1}{\sqrt{3(3-2\beta)}}.
\end{align}
The inequality \eqref{eq-4.14} is sharp for the function $f\in\mathcal{A}$ given by \eqref{eq-3.2} with
\begin{align*}
	p(z)=\frac{1+\frac{2(2-\beta)}{\sqrt{3(3-2\beta)}}z +z^2}{1 -z^2},
\end{align*}
which completes the proof.
\end{proof}
Setting $\beta = 0$ in Theorem \ref{th-4.2}, we obtain the following result for the logarithmic coefficients of an inverse function in the class of bounded turning functions.
\begin{cor}
If $f\in\mathcal{R}$ is given by \eqref{Eq-2.1} and $ \Gamma_{1}, \Gamma_{2} $ are given by \eqref{eq-4.6}, then we have 
\begin{align*}
	-\frac{1}{3}\leq|\Gamma_2|-|\Gamma_1|\leq \frac{1}{3}.
\end{align*}
Both inequalities are sharp.
\end{cor}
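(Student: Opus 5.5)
The plan is to specialize Theorem \ref{th-4.2} to $\beta=0$, since $\mathcal{A}_0=\mathcal{R}$, and then to check sharpness directly with explicit extremal functions.

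\textbf{Upper bound.} Because $\beta=0$ falls in the range $0\le\beta<1$, the first branch of the upper estimate in Theorem \ref{th-4.2} applies and gives
\[
|\Gamma_2|-|\Gamma_1|\le \frac{1}{3-2\cdot 0}=\frac13 .
\]

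\textbf{Lower bound.} The lower estimate of Theorem \ref{th-4.2} at $\beta=0$ gives
\[
|\Gamma_2|-|\Gamma_1|\ge -\frac{1}{\sqrt{3\cdot 3}}=-\frac13 .
\]

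\textbf{Sharpness.} For $f\in\mathcal{R}$ write $f'=p=1+c_1z+c_2z^2+\cdots$. By \eqref{eq-3.3} this gives $a_2=c_1/2$ and $a_3=c_2/3$. Then \eqref{eq-4.6} reads
\[
\Gamma_1=-\frac{c_1}{4},\qquad \Gamma_2=-\frac12\left(\frac{c_2}{3}-\frac{3c_1^2}{8}\right).
\]

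\emph{Upper bound.} Take $p(z)=(1+z^2)/(1-z^2)$, so that $c_1=0$ and $c_2=2$. This gives $\Gamma_1=0$ and $\Gamma_2=-1/3$, so the difference equals $1/3$.

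\emph{Lower bound.} Take the $\beta=0$ instance of the extremal function from the proof of Theorem \ref{th-4.2}:
\[
p(z)=\frac{1+\frac43 z+z^2}{1-z^2}.
\]
To see that $p\in\mathcal{P}$, write it as the convex combination
\[
p(z)=\frac56\cdot\frac{1+z}{1-z}+\frac16\cdot\frac{1-z}{1+z}.
\]
Here $c_1=4/3$ and $c_2=2$, so $a_2=a_3=2/3$. This gives $\Gamma_1=-1/3$ and
\[
\Gamma_2=-\tfrac12\left(\tfrac23-\tfrac32\cdot\tfrac49\right)=0,
\]
so the difference equals $-1/3$.

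\textbf{Main obstacle.} Essentially none remains, since the substance lies in Lemma \ref{lem-4.1} and Theorem \ref{th-4.2}. The only point needing care is confirming that the lower-bound extremal function really is a Carathéodory function, and the convex-combination representation settles this.
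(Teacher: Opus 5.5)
Your proposal is correct and follows the paper's route: the corollary is Theorem~\ref{th-4.2} at $\beta=0$ (where $\mathcal{A}_0=\mathcal{R}$), and your sharpness checks use the same extremal functions, $p(z)=(1+z^2)/(1-z^2)$ and $p(z)=(1+\frac43 z+z^2)/(1-z^2)$. Your observation that the second function equals $\frac56\cdot\frac{1+z}{1-z}+\frac16\cdot\frac{1-z}{1+z}$, and so lies in $\mathcal{P}$, is a small but welcome verification that the paper omits.
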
\vspace{5mm}

\noindent{\bf Acknowledgment:} The authors express their sincere gratitude to Prof. Swadesh Kumar Sahoo for several helpful discussions and suggestions. Additionally, we would like to thank the reviewers for their thoughtful feedback, which significantly enhanced the clarity and presentation of the manuscript.\\

\noindent{\bf Author Contributions:} Both authors actively contributed to the research presented in this paper and reviewed the manuscript. \\

\noindent\textbf{Compliance of Ethical Standards:}\\

\noindent\textbf{Conflict of interest} The authors declare that there is no conflict  of interest regarding the publication of this paper.\vspace{1.5mm}

\noindent\textbf{Data availability statement}  Data sharing not applicable to this article as no datasets were generated or analyzed during the current study.

\end{document}